\documentclass[12pt]{amsart}

\usepackage{graphicx,amssymb,amsmath,hyperref}
\usepackage{wasysym,color}

\headheight=8pt
\topmargin=30pt 
\textheight=611pt     \textwidth=456pt
\oddsidemargin=6pt   \evensidemargin=6pt

\newtheorem{theorem}{Theorem}
\newtheorem{lemma}[theorem]{Lemma}
\newtheorem{cor}[theorem]{Corollary}

\theoremstyle{definition}

\newtheorem{example}[theorem]{Example}

\newtheorem*{remark*}{Remark}


\title{Mixed Volume Techniques for Embeddings of Laman Graphs}
\author{Reinhard Steffens, Thorsten Theobald}
\index{Steffens, Reinhard} \index{Theobald, Thorsten}
\thanks{ Research supported through DFG grant TH1333/1-1}
\address{Goethe-Universit\"at, FB 12 -- Institut f\"ur Mathematik, Postfach 111932, D-60054 Frankfurt~ a.M.}

\newcommand{\C}{{\mathbb C}}
\newcommand{\R}{{\mathbb R}}
\newcommand{\Q}{{\mathbb Q}}

\newcommand{\dnull}{{\bf 0 }}
\newcommand{\deins}{{\bf 1 }}
\DeclareMathOperator{\conv}{conv}
\DeclareMathOperator{\NP}{NP}
\DeclareMathOperator{\MV}{MV}

\definecolor{RED}{rgb}{0.6,0,0}

\begin{document}

\maketitle

\begin{abstract}
Determining the number of embeddings of Laman graph frameworks is an open problem
which corresponds to understanding the solutions of the resulting systems of 
equations.
In this paper we investigate the bounds which can be obtained from the viewpoint
of Bernstein's Theorem. 
The focus of the paper is to provide the methods to study the mixed volume of suitable 
systems of  polynomial equations obtained from the edge length constraints.
While in most cases the resulting bounds are weaker than the best known bounds on the 
number of embeddings, for some classes of graphs the bounds are tight.

\end{abstract}
{\bf Keywords:} {\it Mixed volume, Laman graphs, minimally rigid graphs, Bernstein's Theorem, BKK theory}

\section{Introduction}
Let $G=(V,E)$ be a graph on $n$ vertices with $2n-3$ edges. If each
subset of $k$ vertices spans at most $2k-3$ edges, we say that $G$ has the
{\it Laman property} and call it a {\it Laman graph} (see \cite{Laman}). 
A {\it framework} is a tuple $(G,L)$ where $G=(V,E)$ is a graph and $L=\{l_{i,j}\, :\, [v_i,v_j]\in E\}$ is a set of $|E|$ positive numbers interpreted as edge lengths.
For generic edge lengths, Laman graph frameworks are minimally rigid (see \cite{Connelly}), 
i.e. they are rigid and they become flexible if any edge is removed. 

A {\it Henneberg sequence} for a graph $G$ is a sequence
$(G_i)_{3\leq i \leq r}$ of Laman graphs such
that $G_3$ is a triangle, $G_r=G$ and each $G_i$ is obtained by
$G_{i-1}$ via one of the following two types of steps: A {\it
  Henneberg I step} adds one new vertex $v_{i+1}$ and two new edges,
connecting $v_{i+1}$ to two arbitrary vertices of $G_i$. A {\it
  Henneberg II step} adds one new vertex  $v_{i+1}$ and three new edges,
connecting $v_{i+1}$ to three vertices of $G_i$ such that at least two
of these vertices are connected via an edge $e$ of $G_i$ and this certain
edge $e$ is removed (see Figure \ref{FigHenneberg}).
\begin{figure}[h]
\setlength{\unitlength}{0.18pt}
	\begin{center}
		\begin{picture}(1200,550)(0,0)
			\put(10,40){\includegraphics[scale=0.18]{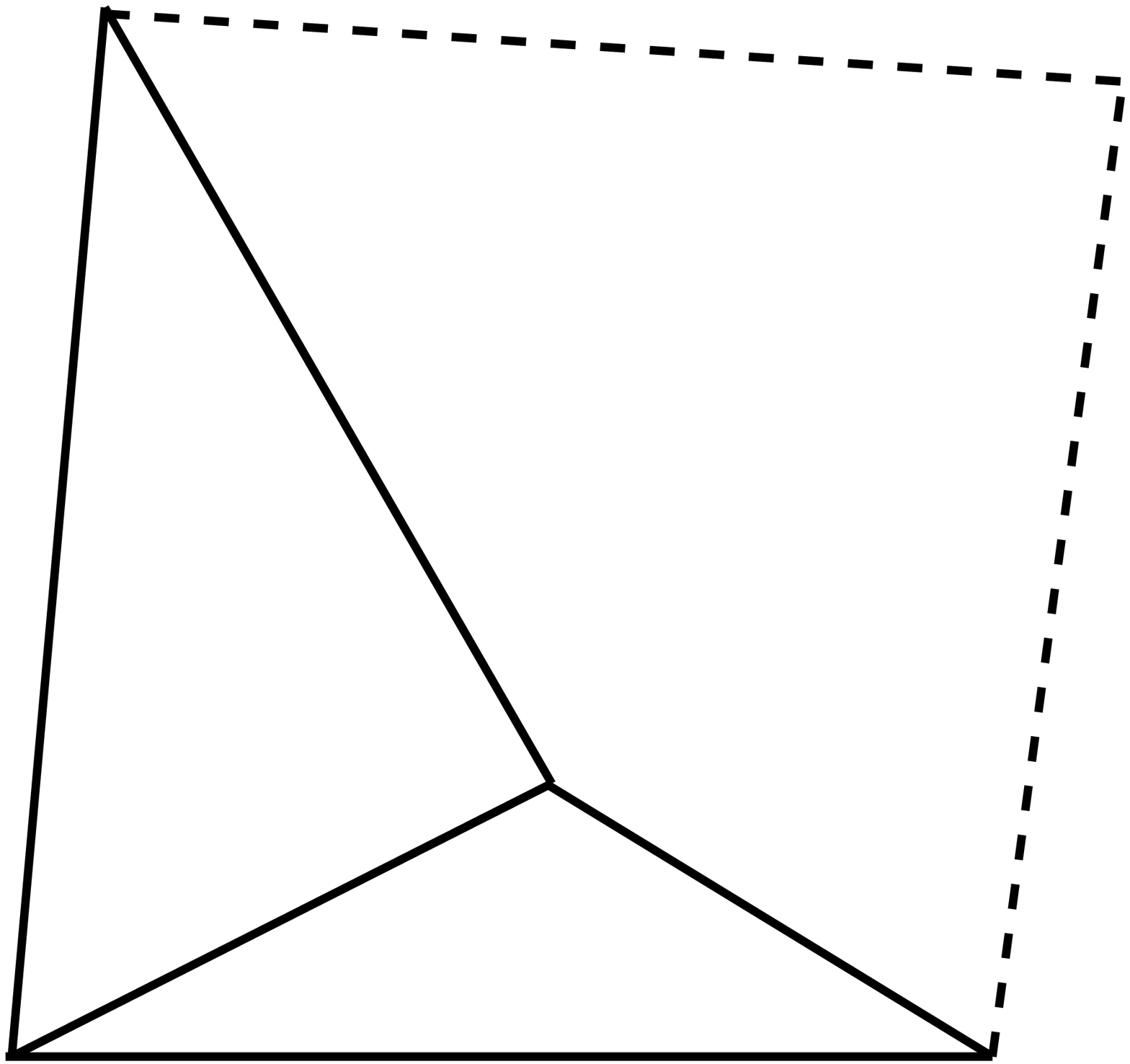}}
			\put(650,40){\includegraphics[scale=0.18]{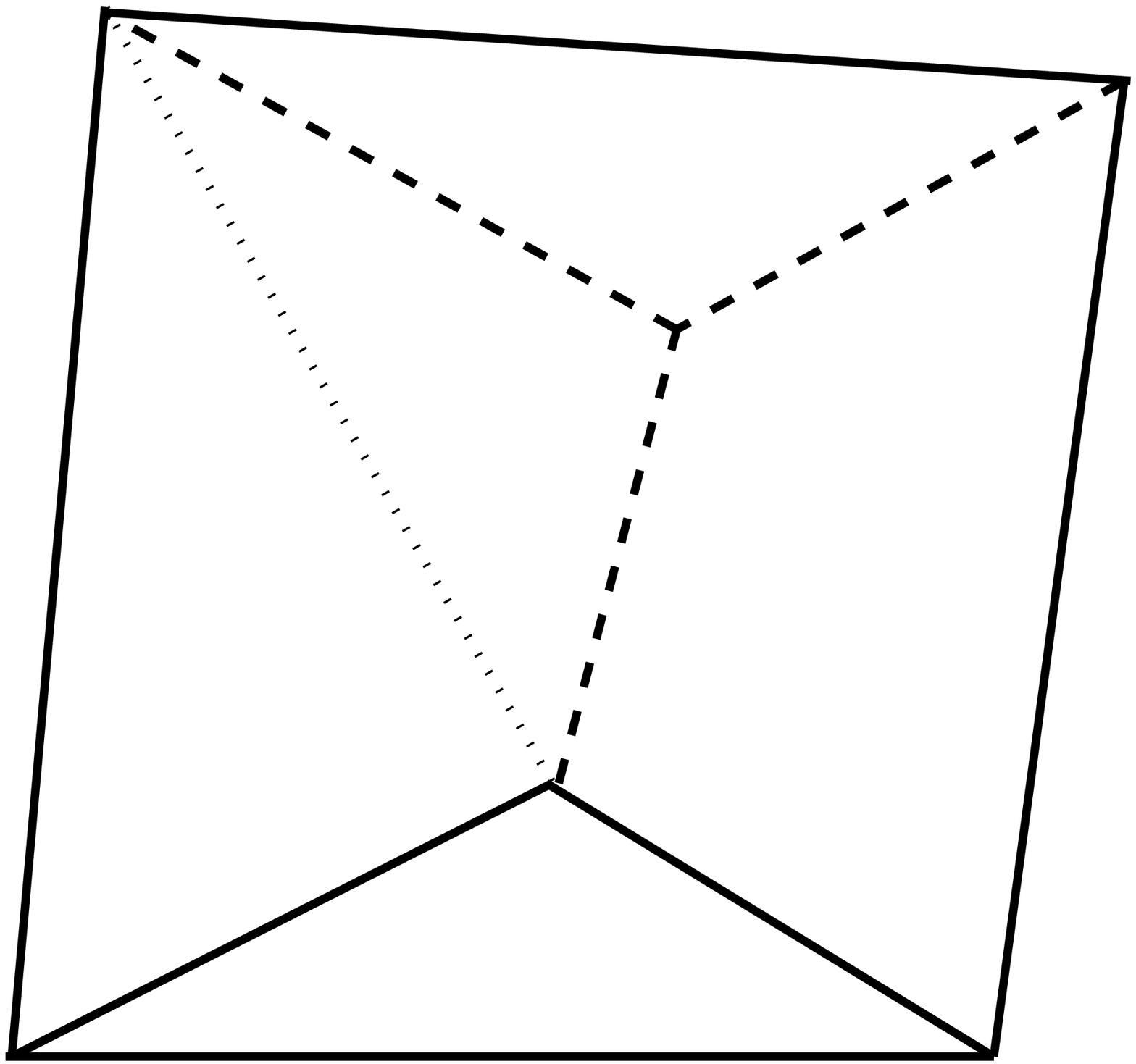}}
			\put(0,0){\mbox{$v_1$}}
			\put(450,0){\mbox{$v_2$}}
			\put(640,0){\mbox{$v_1$}}
			\put(1090,0){\mbox{$v_2$}}
			\put(290,165){\mbox{$v_3$}}
			\put(940,165){\mbox{$v_3$}}
			\put(0,540){\mbox{$v_4$}}
			\put(650,545){\mbox{$v_4$}}
			\put(520,510){\mbox{$v_5$}}
			\put(1170,510){\mbox{$v_5$}}
			\put(980,340){\mbox{$v_6$}}
		\end{picture}
	\caption{A Henneberg I and a Henneberg II step. New edges are dashed and the deleted edge is pointed.}
	\label{FigHenneberg}
\end{center}
\end{figure}
\setlength{\unitlength}{1pt}

Any Laman graph $G$ can be constructed via a Henneberg sequence and
any graph constructed via a Henneberg sequence has the Laman property (see
\cite{StreinuTheran,TayWhiteley}). We call $G$ a {\it Henneberg I graph} if it
is constructable using only Henneberg I steps. Otherwise we call it
{\it Henneberg II}.

 Given a Laman graph framework we want to know how many embeddings, i.e. maps $\alpha: V \rightarrow \R^2$, exist such that the Euclidean distance between two points in the image is exactly $l_{i,j}$ for all $[v_i,v_j]\in E$. Since every rotation or translation of an embedding gives another one, we ask how many embeddings exist {\it modulo rigid motions}.
Due to the minimal rigidity property, questions about embeddings of Laman graphs arise naturally in
rigidity and linkage problems (see \cite{Haas,Thorpe}).
Graphs with less edges will have zero or infinitely many embeddings modulo rigid motions,
and graphs with more edges do not have any embeddings for a generic choice of edge lengths.

Determining the maximal number of embeddings (modulo rigid motions)
for a given Laman graph is an open problem.
The best upper bounds are due to Borcea and Streinu (see \cite{Borcea,BorceaStreinu})
who show that the number of embeddings is bounded by $\binom{2n-4}{n-2} \approx \frac{4^{n-2}}{\sqrt{n-2}}$. Their bounds are based on
degree results of determinantal varieties. 

A general method to study the number of (complex) solutions of systems of
polynomial equations is to use Bernstein's Theorem \cite{Bernstein} for sparse
polynomial systems. This theorem provides bounds on the number of solutions
in terms of the mixed volume of the underlying Newton polytopes.
Since the systems of polynomial
equations describing the Laman embeddings are sparse, the question arose
how good these Bernstein bounds are for the Laman embedding problem.
While for concrete systems of equations, the mixed volume can be
computed algorithmically, studying the mixed volume for \emph{classes
of polytopes} is connected with a variety of issues in convex geometry
(such as understanding the Minkowski sum of the polytopes).

In this paper, we study the quality of the Bernstein bound on the Laman embedding problem and provide methods to handle the resulting convex geometric problems.
In most cases, our bounds are worse than the bounds in \cite{BorceaStreinu}.
However, we think that the general methodology of studying Bernstein
bounds nevertheless provides an interesting technique, and we see
the main contribution of this paper in providing the technical tools
(such as achieving to determine the mixed volume)
to compute these bounds for whole classes of graphs.
It is particularly 
interesting that for some classes of graphs, the mixed volume
bound is tight.

To use these algebraic tools for the embedding problem 
we formulate that problem as a 
system of polynomial equations in the $2n$ unknowns $(x_1,y_1,\dots,x_n,y_n)$ where $(x_i,y_i)$ denote the coordinates of the embedding of the vertex $v_i$. 
Each prescribed edge length translates into a polynomial equation. I.e. if $e_k:=[v_i,v_j]\in E$ with length $l_{i,j}$, we require $h_k(x):=(x_i-x_j)^2+(y_i-y_j)^2-l_{i,j}^2=0$. Thus we obtain a system of $|E|$ quadratic equations whose solutions represent the embeddings of our framework. To get rid of translations and rotations we fix one point $(x_1,y_1)=(c_1,c_2)$ and the direction of the embedding of the edge $[v_1,v_2]$ by setting $y_2=c_3$. (Here we assume without loss of generality that there is an edge between $v_1$ and $v_2$.) For practical reasons we choose $c_i \neq 0$ and as well $c_1\neq l_{1,2}$. 
Hence we want to study the solutions to the following system of $2n$ equations.
\begin{equation}\label{SoE}
\left. \begin{cases} h_1(x):=x_1-c_1=0 \\ h_2(x):=y_1-c_2=0 \\ h_3(x):=x_2-(l_{1,2}-c_1)=0 \\ h_4(x):=y_2-c_3=0 \\ h_k(x):=(x_i-x_j)^2+(y_i-y_j)^2-l_{i,j}^2 =0 \quad \forall e_k=[v_i,v_j]\in E-\{[v_1,v_2]\}   \end{cases} \right\}
\end{equation}

The paper is structured as follows. In Section~\ref{sec:Definitions} we review the concepts of mixed volumes and Bernstein's Theorem. In Section~\ref{sec:Tools} we present some technical tools to simplify mixed volume calculation. Then, in Section~\ref{sec:BKK} we discuss the quality of the Bernstein bounds on the Laman embedding problem. 

\section{Preliminaries}\label{sec:Definitions}

\subsection{Mixed volumes and mixed subdivisions}\label{sec:MixedVolumes}
The \emph{Minkowski sum} of two sets $A_1, A_2 \subset \R^k$ is defined as
\[
A_1+A_2 = \left\{a_1+a_2 \, |\, a_1\in A_1, a_2\in A_2 \right\} \ . 
\]
Let $P_1,\dots,P_k$ be $k$ polytopes in $\R^k$. For non-negative parameters 
$\lambda_1,\dots,\lambda_k$ the function
$\text{vol}_k(\lambda_1P_1+\dots+\lambda_kP_k)$ is a homogeneous
polynomial of degree $k$ in $\lambda_1,\dots,\lambda_k$ with
non-negative coefficients (see e.g. \cite{Schneider, Webster}).  The coefficient of the mixed
monomial $\lambda_1\cdots\lambda_k$ is called the {\it mixed volume of $P_1,\dots,P_k$} and is
denoted by $\MV_k(P_1,\dots,P_k)$. 

We denote by $\textup{MV}_{k}(P_1,d_1; \dots; P_r,d_r)$ the mixed volume where $P_i$ is taken $d_i$ times and $\sum_{i=1}^r d_i =k$.
The mixed volume is invariant under permutation of its arguments, it is linear in each argument, i.e.
\begin{equation}
\MV_k(\dots, \alpha P_i + \beta P'_i,\dots)= 
\alpha\, \MV_k(\dots, P_i ,\dots)+ \beta \, \MV_k(\dots, P'_i ,\dots) \label{linearity} 
\end{equation}
and it generalizes the usual volume in the sense that
\begin{equation} \label{MV_Vol}
\MV_k(P,\dots,P)= k! \,\text{vol}_k (P)
\end{equation}
holds (see \cite{Schneider}).

Let $P=P_1+\dots+P_r\subset\R^k$ be a Minkowski sum of polytopes that affinely spans $\R^k$. A sum $C=F_1+\dots+F_r$ of faces $F_i\subset P_i$ is called {\it cell} of $P$. A {\it subdivision} of $P$ is a collection $\Gamma=\{C_1,\dots,C_m\}$ of cells such that each cell is of full dimension, the intersection of two cells is a face of both and the union of all cells covers $P$. Each cell is given a type $type(C)= (\text{dim}(F_1),\dots, \text{dim}(F_r))$. Clearly the entries in the type vector sum up to at least the dimension of the cell $C$.  A subdivision is called {\it mixed} if for each cell $C\in \Gamma$ we have that $\sum d_i =k$ where $type(C)=(d_1,\dots,d_r)$. 
Cells of type $(d_1,\dots,d_r)$ with $d_i\geq 1$ for each $i$ will be called {\it mixed cells}.

With this terminology the mixed volume can be calculated by
\begin{equation}\label{explicitMV_2}
\MV_k(P_1,d_1;\dots;P_r,d_r) =\sum_{C} d_1!\, \cdots d_r!\ \text{vol}_k\,(C) 
\end{equation}
where the sum is over all cells $C$ of type $(d_1,\dots,d_r)$ in an arbitrary mixed subdivision of $P_1+\dots+P_r$ (see \cite{Huber}).

To construct mixed subdivisions we proceed as in \cite{Huber}. Not every subdivision can be constructed in this way but since we only need one arbitrary mixed subdivision this simple construction can be used. For each polytope $P_i$ choose a linear lifting function $\mu_i:\R^k \rightarrow \R$ identified by an element of $\R^k$. By $\hat{P_i}$ we denote the lifted polytopes $\conv \{(q,\langle \mu_i,q \rangle)\, : \, q\in P_i \}\subset \R^{k+1}$, where $\langle \cdot, \cdot \rangle$ denotes the Euclidean scalar product.

The set of those facets of $\hat{P}:=\hat{P}_1+\dots+\hat{P}_r$ which have an inward pointing normal with a positive last coordinate is called \emph{the lower hull} of $\hat{P}$. Projecting down this lower hull back to $\R^k$ by forgetting the last coordinate yields a subdivision of $P_1+\dots+P_r$. Such a subdivision is called {\it coherent} and is said to be {\it induced by} $\mu =(\mu_1,\dots,\mu_r)$.

\begin{example}\label{Example:1}
Let 
\[
P=\conv\left\{\begin{pmatrix}0\\ 0 \end{pmatrix}, \begin{pmatrix}3\\ 0 \end{pmatrix},\begin{pmatrix}0\\ 2 \end{pmatrix},\begin{pmatrix}3\\ 2 \end{pmatrix} \right\}\, , \quad Q=\conv\left\{\begin{pmatrix}1\\ 0 \end{pmatrix}, \begin{pmatrix} 0 \\ \frac{3}{2}\end{pmatrix},\begin{pmatrix}3\\ 3 \end{pmatrix} \right\}\ .
\]
The Minkowski sum of $P$ and $Q$ is depicted in Figure~\ref{Figure:ExMinkSum} together with one of the possible coherent mixed subdivisions.
\begin{figure}[ht]
\setlength{\unitlength}{0.38pt}
	\begin{center}
		\begin{picture}(650,300)(0,0)
			\put(0,0){\includegraphics[scale=0.38]{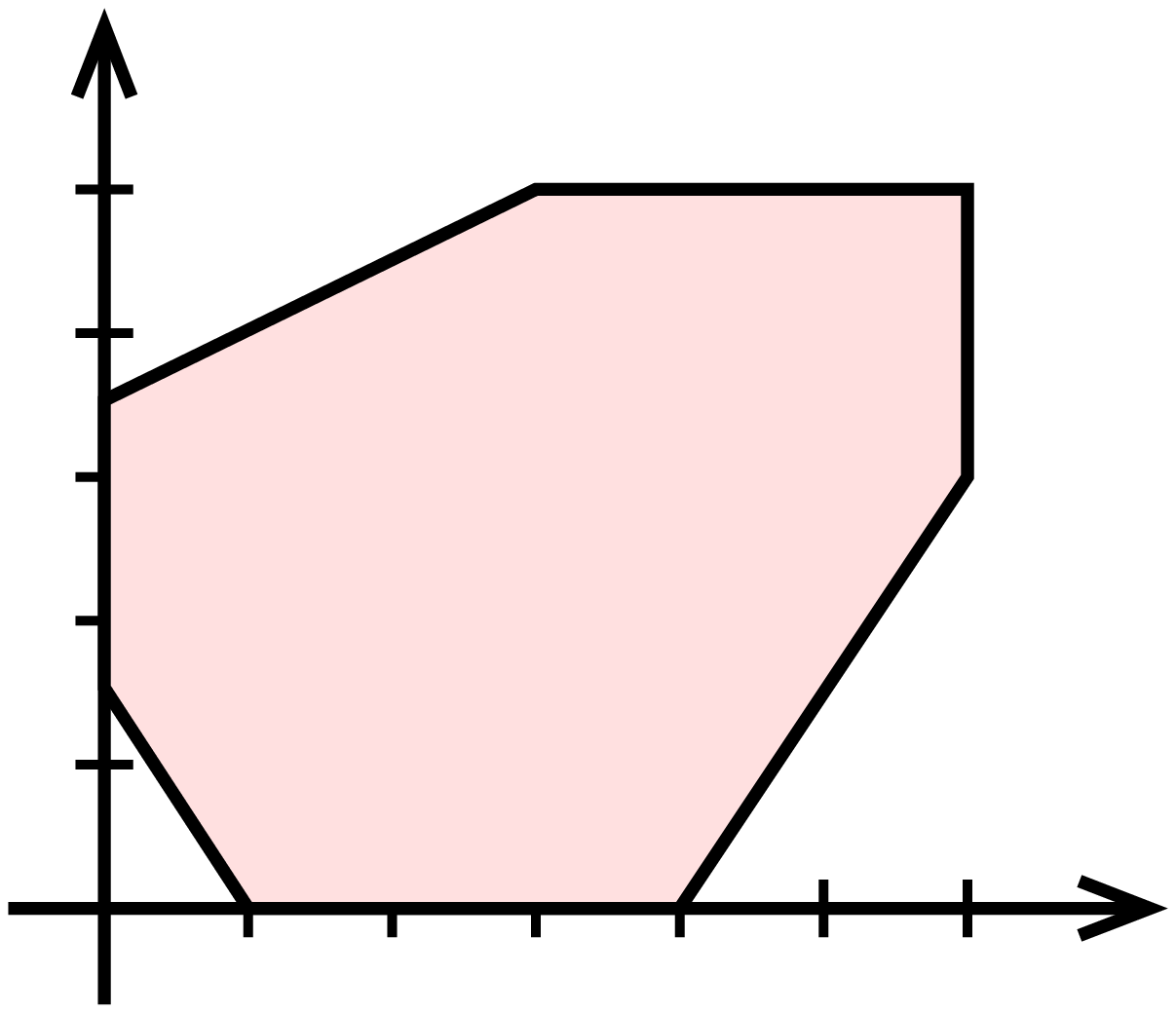} }
			\put(390,30){\includegraphics[scale=0.38]{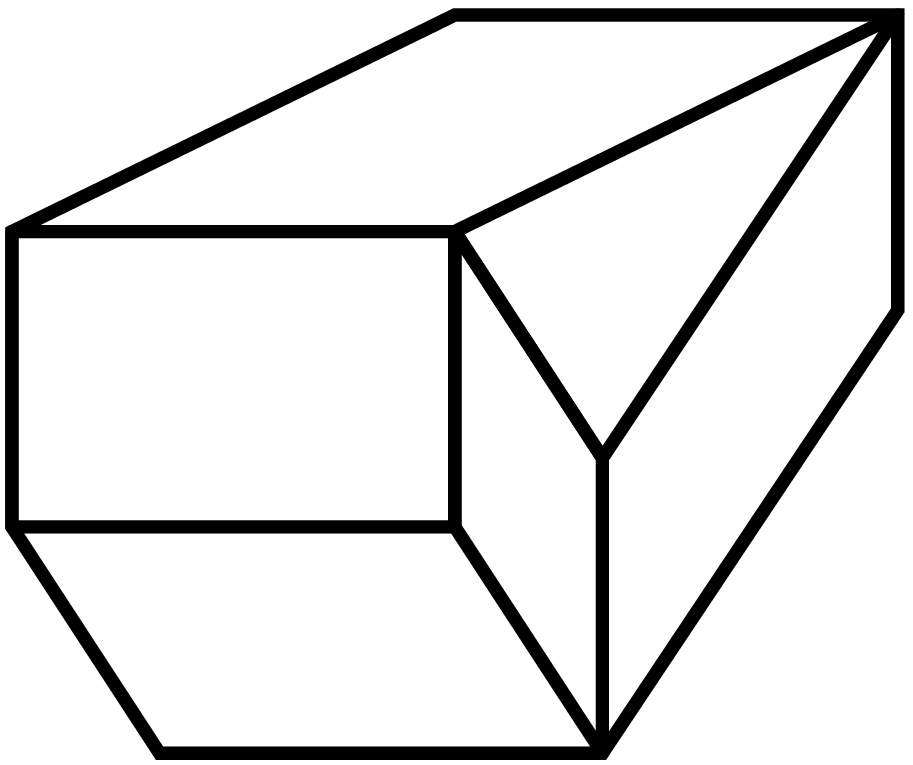}}
			\put(100,120){\mbox{$P+Q$}}
			\put(440,125){\mbox{$P$}}
			\put(555,165){\mbox{$Q$}}
			\put(500,200){\mbox{$C_1$}}
			\put(524,100){\mbox{$C_2$}}
			\put(595,140){\mbox{$C_3$}}
			\put(460,55){\mbox{$C_4$}}
		\end{picture}
	\caption{Left: The Minkowski sum of $P$ and $Q$. Right: A mixed subdivision $\Gamma$ of $P+Q$.}
	\label{Figure:ExMinkSum}
\end{center}
\end{figure}
\setlength{\unitlength}{1pt}
\end{example}

\subsection{BKK theory} \label{sec:Bernstein}
The main tool in this work is the following theorem that provides a connection between solutions to systems of polynomial equations and discrete geometry. For a polynomial $f=\sum_{\alpha\in A} c_{\alpha}x^{\alpha}\in \C[x_1,\dots,x_k]$ the Newton polytope $\NP(f)\subset\R^k$ is the convex hull of the monomial exponent vectors, i.e. $\NP(f)=\conv A$. Let $\C^* := \C \setminus \{0\}$.
\begin{theorem}[Bernstein \cite{Bernstein}] \label{The:Bernstein}
Given polynomials $f_1,\dots,f_k\in\C[x_1,\dots,x_k]$  with finitely
many common zeroes in $(\C^*)^k$ and let $\NP(f_i)$ denote the Newton polytope of
$f_i$. Then the number of common zeroes of the $f_i$ in
$(\C^*)^k$ is bounded above by the mixed volume
$\textup{MV}_k(\NP(f_1),\dots,\NP(f_k))$. Moreover for generic choices of the
coefficients in the  $f_i$, the number of common solutions is exactly
$\textup{MV}_k(\NP(f_1),\dots,\NP(f_k))$. 
\end{theorem}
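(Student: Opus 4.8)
The plan is to establish the exact count for generic coefficients first and then obtain the upper bound for arbitrary coefficients by a specialization argument. For the latter I would invoke upper semicontinuity of the number of isolated solutions in $(\C^*)^k$: deforming a generic system to the given one along a path of coefficients can only cause solutions to collide or to escape to the coordinate hyperplanes or to infinity, never to be created, so the generic count bounds the number of isolated solutions in every fiber. This reduces everything to proving that for generic $f_1,\dots,f_k$ the number of solutions equals $\MV_k(\NP(f_1),\dots,\NP(f_k))$.

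For the generic case I would use the polyhedral homotopy built directly from the mixed subdivision machinery of Section~\ref{sec:MixedVolumes}. Choose a generic tuple of linear liftings $\mu=(\mu_1,\dots,\mu_k)$, inducing a coherent mixed subdivision $\Gamma$ of $P:=\NP(f_1)+\dots+\NP(f_k)$ as described before Example~\ref{Example:1}. Use these liftings to deform the system: in each $f_i=\sum_\alpha c_{i,\alpha}x^\alpha$ replace the monomial $c_{i,\alpha}x^\alpha$ by $c_{i,\alpha}\,t^{\langle\mu_i,\alpha\rangle}x^\alpha$, producing a one-parameter family $f_i(x;t)$ with $f_i(x;1)=f_i$. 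The idea is to count the solutions of this family as $t\to 0$, where the combinatorics of $\Gamma$ takes over, and then to continue these solutions back to $t=1$.

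The behaviour as $t\to 0$ is governed by Puiseux series $x_j(t)=t^{w_j}\bigl(a_j+o(1)\bigr)$ with $(a_1,\dots,a_k)\in(\C^*)^k$. The admissible leading exponent vectors $w$ are exactly the inner normals of the lower-hull facets of the lifted Minkowski sum, and hence they index the cells of $\Gamma$. For a mixed cell $C$ of type $(1,\dots,1)$ the lowest-order part of each $f_i(x;t)$ is supported on a single edge of $\NP(f_i)$ and is therefore a binomial, so the leading system attached to $C$ is a binomial system in $(\C^*)^k$. A standard computation via the Smith normal form of the matrix of edge directions shows that such a binomial system has exactly $[\Z^k:\Lambda]$ nondegenerate solutions in $(\C^*)^k$, where $\Lambda\subset\Z^k$ is the sublattice spanned by the chosen edge vectors; this index equals $\text{vol}_k(C)$, since $C$ is the parallelepiped they span. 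Summing over all mixed cells and comparing with the subdivision formula \eqref{explicitMV_2} in the case $d_1=\dots=d_k=1$ yields $\sum_C \text{vol}_k(C)=\MV_k(\NP(f_1),\dots,\NP(f_k))$, the desired total.

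The main obstacle is the concluding path-continuation step: one must show that for generic coefficients \emph{and} generic lifting every leading-order root extends to a smooth solution path that stays inside $(\C^*)^k$, remains bounded, and does not collide with another path before $t=1$, so that the solutions at $t=1$ are in bijection with the aggregated binomial roots. This is where genericity is indispensable and where the Puiseux/valuation bookkeeping---matching each exponent vector $w$ to its cell of $\Gamma$ and verifying that the non-mixed cells contribute no torus solutions---has to be carried out with care.
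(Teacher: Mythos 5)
The paper does not prove this statement at all: Theorem~\ref{The:Bernstein} is quoted as background and attributed entirely to \cite{Bernstein}, so there is no in-text proof to compare your argument against. Judged on its own terms, your sketch follows the standard polyhedral-homotopy route (in the spirit of Huber and Sturmfels, and of the mixed-subdivision machinery of Section~\ref{sec:MixedVolumes}) rather than Bernstein's original Puiseux-series argument over the field of fractional power series. The combinatorial core is correct: for a mixed cell $C$ of type $(1,\dots,1)$ the leading system is binomial, its number of torus solutions is the lattice index $[\Z^k:\Lambda]=\lvert\det E\rvert=\text{vol}_k(C)$ of the parallelepiped spanned by the chosen edges, cells with some $d_i=0$ force a facial equation supported on a single monomial and hence contribute no torus solutions, and summing over mixed cells recovers $\MV_k(\NP(f_1),\dots,\NP(f_k))$ by \eqref{explicitMV_2} with $d_1=\cdots=d_k=1$.

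However, the argument as written is a plan, not a proof, and you identify the missing piece yourself. The continuation step --- that for generic coefficients and generic lifting every leading-order root of every mixed cell extends to a solution path that stays in $(\C^*)^k$, remains bounded, reaches $t=1$ without colliding with another path, and conversely that every solution at $t=1$ arises this way --- is the analytic substance of the theorem; it requires an actual discriminant/properness argument (the locus of bad coefficient vectors is a proper Zariski-closed subset), not merely the remark that genericity is indispensable. The same applies to your opening reduction: upper semicontinuity of the number of isolated torus solutions under specialization of coefficients is exactly the kind of statement that needs a degeneration or intersection-theoretic justification, and asserting that solutions ``can only collide or escape'' presupposes it. So the proposal takes a legitimate and standard route, but the two steps that carry the real weight are left open.
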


Various attempts have been made to generalize these results to count all common roots in $\C^k$ (see for example \cite{EmirisVerschelde, HuberSturmfels2, LiWang}). The easiest, but sometimes not the best bound is $\MV_k(\text{conv}(\NP(f_1)\cup {0}),\dots,\text{conv}(\NP(f_k)\cup{0}))$ which is shown in \cite{LiWang}. Since the Newton polytopes of system (\ref{SoE}) all contain the point $0$ as a vertex, the mixed volume of (\ref{SoE}) yields a bound on the number of solutions in $\C$ rather then only on those in $\C^*$.  

The bound on the number of solutions of a polynomial system arising from Bernstein's Theorem is also often referred to as the {\it BKK bound} due to the work of Bernstein, Khovanskii and Kushnirenko. The BKK bound generalizes the B\'ezout bound (see \cite[Chapter 7]{CLO2}) and for sparse polynomial systems it is often significantly better. 

Bernstein also gives an explicit condition when a choice of coefficients is generic.
Let $w$ be a non-zero vector and let $\partial_w P$ denote the face of a polytope $P$ which is minimal with respect to the direction $w$. Also we set $\partial_w f = \sum_{\alpha \in \partial_w \NP(f)}c_{\alpha} x^{\alpha}$ to be the face equation with respect to  $w$. 
\begin{theorem}[Bernstein's Second Theorem \cite{Bernstein}] \label{The:Bernstein2}
If for all $w\neq 0$, the face system $\partial_w f_1 =0, \dots ,\partial_w f_k =0  $ has no solution in $(\mathbb{C}^*)^k$, then the mixed volume of the Newton polytopes of the $f_i$ gives the exact number of common zeros in $(\mathbb{C}^*)^k$ and all solutions are isolated. Otherwise it is a strict upper bound.
\end{theorem}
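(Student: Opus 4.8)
The plan is to deduce the precise genericity criterion from the first Bernstein Theorem (Theorem~\ref{The:Bernstein}) together with a degeneration argument built on the coherent mixed subdivisions of Section~\ref{sec:MixedVolumes}. The first theorem already guarantees that the number $N$ of common zeros in $(\C^*)^k$ satisfies $N \le \MV_k(\NP(f_1),\dots,\NP(f_k))$ and that equality holds for generic coefficients; the task is to characterize the locus where equality fails and to show that it is governed exactly by the face systems $\partial_w f_i$.

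First I would introduce a toric deformation of the system. Choosing a generic linear lifting $\mu_i$ for each support as in Section~\ref{sec:MixedVolumes}, I form the one-parameter family
\[
f_i(x,t) = \sum_{\alpha \in \NP(f_i)} c_{i,\alpha}\, x^{\alpha}\, t^{\langle \mu_i, \alpha\rangle},
\]
so that $f_i(x,1)$ recovers the original system while the lifting $\mu=(\mu_1,\dots,\mu_r)$ induces a coherent mixed subdivision of $\NP(f_1)+\dots+\NP(f_k)$. The maximal mixed cells of this subdivision correspond to the lower facets of the lifted Minkowski sum, and each such facet has an inner normal of the form $(w,1)$ with positive last coordinate. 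I would expand the solution curves $x(t)$ as Puiseux series in $t$ and show, cell by cell, that the limiting behaviour as $t\to 0$ is controlled by the face system $\partial_w f_1=\dots=\partial_w f_k=0$ attached to the normal $w$ of that cell: the leading coefficients of a convergent branch solve precisely this face system in $(\C^*)^k$, and conversely each nondegenerate solution of the face system initiates exactly one branch. Counting branches through the normalized volumes of the mixed cells via formula~\eqref{explicitMV_2} then reproduces the mixed volume.

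With this dictionary in place I would argue as follows. A solution path $x(t)$ either stays in $(\C^*)^k$ as $t \to 1$ and contributes an honest torus solution of the original system, or it diverges to the toric boundary (some coordinate tending to $0$ or $\infty$). The second alternative occurs exactly when the relevant face system $\partial_w f_i = 0$ has a solution in $(\C^*)^k$, since such a solution is the leading term of a path escaping along the one-parameter subgroup generated by $w$. Hence, if for every $w\ne 0$ the face system has no solution in $(\C^*)^k$, no path is lost to the boundary, every one of the $\MV_k(\NP(f_1),\dots,\NP(f_k))$ paths terminates at a torus solution, and a Jacobian transversality check (again forced by the nonvanishing of the face systems) shows these solutions to be isolated and simple, so their number is exactly the mixed volume. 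Conversely, a torus solution of some face system produces at least one lost path, so strictly fewer than $\MV_k$ paths survive and the mixed volume is a strict upper bound.

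The main obstacle I anticipate lies in the path-tracking analysis of the second paragraph: making the Puiseux expansion rigorous, proving that the branch count per mixed cell equals its normalized volume (which reduces to solving a binomial facial subsystem and computing a lattice index), and --- most delicately --- establishing the exact equivalence between ``a path escapes to the boundary'' and ``a face system has a torus zero,'' including the multiplicity bookkeeping that makes the final bound strict rather than merely non-strict. An essentially equivalent but more algebraic route would replace the homotopy by intersection theory on a smooth complete toric variety $X_\Sigma$ whose fan refines the normal fans of the $\NP(f_i)$: there the mixed volume is the top intersection number of the associated nef divisor classes, common zeros on the boundary orbits correspond to torus zeros of the face systems (since restricting $f_i$ to an orbit closure yields $\partial_w f_i$ up to a monomial factor), and strictness follows from the positivity of those boundary intersection multiplicities. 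Either way, the crux is controlling what happens on the toric boundary.
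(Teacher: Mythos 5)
First, a point of comparison: the paper does not prove this statement at all --- it is quoted from Bernstein's original article and used as a black box --- so there is no in-paper argument to measure your proposal against. On its own terms, your sketch follows the classical route (toric degeneration, Puiseux branches, counting via mixed cells), which is essentially Bernstein's own proof strategy and the one made algorithmic by the polyhedral homotopy literature the paper cites in Section~\ref{sec:MixedVolumes}. So the approach is the right one in spirit, but as written it has three concrete gaps.

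(1) You attach the direction $w$ to the inner normals $(w,1)$ of the lower facets of the lifted Minkowski sum. Those normals govern the start of the homotopy at $t\to 0$ and reproduce the count $\MV_k(\NP(f_1),\dots,\NP(f_k))$ via formula~\eqref{explicitMV_2}; they have nothing to do with whether a path escapes as $t\to 1$. The theorem's hypothesis concerns the face systems of the \emph{target} system $f_i=f_i(\cdot,1)$ for \emph{arbitrary} $w\neq 0$, and the link ``path escapes $\Leftrightarrow$ some $\partial_w f_i=0$ has a torus zero'' requires a separate valuation argument: parametrize an escaping branch by Puiseux series in the homotopy parameter near $t=1$, take $w$ to be the vector of valuations of the coordinates, and verify that the leading coefficients solve $\partial_w f_1=\dots=\partial_w f_k=0$ in $(\C^*)^k$. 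Your sketch conflates these two roles of $w$. (2) Isolatedness is not a ``Jacobian transversality check,'' and the solutions need not be simple --- the exact count is with multiplicities. The standard argument is that a positive-dimensional component of the zero set in $(\C^*)^k$ is a non-compact affine variety, hence meets the boundary of a toric compactification, and the restriction of the $f_i$ to the corresponding orbit closure is (up to a monomial factor) a face system, contradicting the hypothesis; a path-tracking picture that only follows isolated branches cannot detect such components. (3) Strictness in the degenerate case needs more than ``at least one path is lost'': one must show that a boundary zero absorbs positive intersection multiplicity. Your proposed intersection-theoretic variant on a smooth toric compactification handles (2) and (3) cleanly and is probably the shorter way to a complete proof; the homotopy version needs the Puiseux bookkeeping you yourself flag as the crux.
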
 
Note that it is necessary for a direction $w$ to be a witness of the degeneracy that it lies on the tropical prevariety (see \cite{FirstStepsTropical}) of the polynomials $f_1,\dots,f_k$.

\section{New technical tools to simplify mixed volume calculation} \label{sec:Tools}
In the special case of Henneberg I graphs  system (\ref{SoE}) is of a shape that allows to separate the mixed volume calculation into smaller pieces. 
The main tool to do this is the following Lemma.
An equivalent decomposition result was already mentioned in \cite{Burago} in which the authors refer to \cite{Fedotov} (in Russian) for the proof. For the convenience of the reader we provide here a proof based
on the properties of symmetric multilinear functions.

\begin{lemma} 
\label{SeparationLemma}
Let $P_1,\dots,P_k$ be polytopes in $\R^{m+k}$ and
$Q_1,\dots,Q_m$ be polytopes in $\R^m\subset\R^{m+k}$ . 
Then
\begin{equation} \label{eq:SepLemma}
\textup{MV}_{m+k}(Q_1,\dots,Q_m,P_1,\dots,P_k)=
\textup{MV}_{m}(Q_1,\dots,Q_m)\cdot\textup{MV}_{k}(\pi(P_1),\dots,\pi(P_k))
\end{equation}
where  $\pi: \R^{m+k} \rightarrow \R^k$ denotes the projection on the
last $k$ coordinates.
\end{lemma}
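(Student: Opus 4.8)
The plan is to exploit the fact that the mixed volume is the unique symmetric multilinear function (up to the normalization in equation (\ref{MV_Vol})) that reproduces $(m+k)!\,\text{vol}_{m+k}$ on the diagonal. Rather than verify multilinearity of the claimed right-hand side directly, I would reduce to the case where all polytopes are line segments, using the linearity property (\ref{linearity}) together with the fact that every polytope is a Minkowski sum of segments in the appropriate limiting/decomposition sense. Concretely, since both sides of (\ref{eq:SepLemma}) are multilinear in each of the arguments $Q_1,\dots,Q_m,P_1,\dots,P_k$, it suffices to prove the identity when each $Q_j$ and each $P_i$ is a segment $[\mathbf{0},q_j]$ or $[\mathbf{0},p_i]$ (translation-invariance of mixed volume lets me place one endpoint at the origin). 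For segments, the mixed volume has a clean closed form: $\MV_{m+k}([\mathbf{0},u_1],\dots,[\mathbf{0},u_{m+k}])$ equals the absolute value of the determinant of the matrix whose columns are $u_1,\dots,u_{m+k}$.

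With everything reduced to segments, the identity becomes a statement about determinants. Writing the $q_j\in\R^m\subset\R^{m+k}$ as vectors whose last $k$ coordinates vanish and the $p_i\in\R^{m+k}$ arbitrarily, the matrix $M$ with columns $q_1,\dots,q_m,p_1,\dots,p_k$ is block lower-triangular: the top-right $m\times k$ block is arbitrary, but the bottom-left $k\times m$ block is zero because the $q_j$ have no components in the last $k$ coordinates. Hence $|\det M| = |\det M_{Q}|\cdot|\det M_{P}|$, where $M_Q$ is the $m\times m$ matrix of the first $m$ coordinates of the $q_j$ and $M_P$ is the $k\times k$ matrix formed by the last $k$ coordinates of the $p_i$ — and the latter is exactly the matrix of columns $\pi(p_1),\dots,\pi(p_k)$. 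This factorization of the determinant is precisely the claimed product of the two lower-dimensional mixed volumes of segments, so the segment case is immediate.

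Two points need care in passing from segments back to general polytopes. First, I must confirm that both sides are genuinely multilinear in the sense of (\ref{linearity}): the left side is by the stated properties of $\MV_{m+k}$, and the right side is a product in which each factor is multilinear in a disjoint set of the arguments (the $Q_j$ appear only in $\MV_m$, the $P_i$ only in $\MV_k(\pi(P_1),\dots,\pi(P_k))$, and $\pi$ is linear so it respects Minkowski sums and scalings). Second, the reduction to segments relies on representing an arbitrary polytope via Minkowski-additive combinations of segments; since the mixed volume of a Minkowski sum expands multilinearly into a sum of mixed volumes of the summands, and both sides expand the same way, matching them on all segment tuples forces equality for all polytopes. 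I would phrase this as: both sides, viewed as functions of $(Q_1,\dots,Q_m,P_1,\dots,P_k)$, are symmetric multilinear in the $Q$'s and in the $P$'s separately and agree on segments, hence agree everywhere.

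I expect the main obstacle to be the reduction step itself — justifying rigorously that agreement on segments propagates to all polytopes via multilinearity. The subtlety is that Minkowski decomposition into segments (zonotope representation) is not available for every polytope, so the cleanest route is not to decompose the polytopes but to invoke the general principle that a symmetric multilinear form on tuples of polytopes is determined by its values on tuples of segments through one endpoint at the origin; equivalently, one argues that both sides are polarizations of the same volume polynomial restricted appropriately. Making that determinacy argument precise, rather than the determinant computation, is where the real content lies, and it is exactly the point at which appealing to the uniqueness of the symmetric multilinear extension (the stated foundation of this approach) does the essential work.
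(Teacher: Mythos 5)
Your determinant computation for segments is correct, and the observation that both sides of \eqref{eq:SepLemma} are symmetric and Minkowski-multilinear matches the paper's starting point. But the reduction from general polytopes to segments is a genuine gap, and it is exactly the step you flag as delicate. Multilinearity here is linearity with respect to \emph{Minkowski} addition, and the subgroup generated by segments under Minkowski addition consists only of (virtual) zonotopes; a general polytope, e.g.\ a simplex, is not of this form. Consequently the ``general principle'' you invoke --- that a symmetric Minkowski-multilinear function on tuples of polytopes is determined by its values on tuples of segments --- is false. A concrete counterexample in the plane: $G(P,Q):=2\MV_2(P,Q)-\MV_2(P,-Q)-\MV_2(-P,Q)$ is symmetric and multilinear in each argument and vanishes on every pair of segments (since $-[0,v]$ is a translate of $[0,-v]$ and $|\det(u,-v)|=|\det(u,v)|$), yet for the standard triangle $T$ one computes $\MV_2(T,T)=1$ and $\MV_2(T,-T)=2$, so $G(T,T)=-2\neq 0$. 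Hence agreement of the two sides of \eqref{eq:SepLemma} on segment tuples does not propagate to all polytopes, and your argument does not close.

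The correct determinacy statement --- and the one the paper uses --- is that a symmetric multilinear function is determined by its values on the \emph{diagonal}, via the polarization (inclusion-exclusion) identity \eqref{multilinear}. The paper applies this twice, once in the $Q$-arguments and once in the $P$-arguments, to reduce \eqref{eq:SepLemma} to the semimixed case $Q_1=\dots=Q_m=Q$, $P_1=\dots=P_k=P$, where the claim becomes $\MV_{m+k}(Q,\dots,Q,P,\dots,P)=m!\,k!\,\text{vol}_m(Q)\cdot\text{vol}_k(\pi(P))$; this is then quoted from the literature (Ewald, Lemma~4.9, or Betke). Note that the diagonal data involves volumes of arbitrary polytopes and sums such as $Q+P$, which is strictly more information than segment data --- that is precisely why polarization works where the segment reduction fails. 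To repair your proof you would need either to establish the semimixed (diagonal) case directly, as the paper does, or to restrict the segment argument to a setting where it suffices, e.g.\ by first proving the diagonal identity and only then using your block-determinant computation as a sanity check.
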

\begin{proof}
First we show the Lemma in the \emph{semimixed case} where $Q_1=\dots=Q_m=:Q$ and $P_1=\dots=P_k=:P$, then
we use properties of symmetric multilinear functions to reduce the general situation to the
semimixed case.

By (\ref{MV_Vol}) we have to show first that 
\begin{equation}\label{Case1}
\textup{MV}_{m+k}(Q,\dots,Q,P,\dots,P)=m!\, k!\ \textup{vol}_m(Q)\cdot\textup{vol}_k(\pi(P)) 
\end{equation}
where $Q$ is taken $m$ times and $P$ is taken $k$ times. But this formula for semimixed systems is a special case of Lemma 4.9 in \cite{Ewald} or also of Theorem 1 in \cite{Betke}.

Let $\mathcal{P}^m$ (resp. $\mathcal{P}^{m+k}$)  be the set of all $m$-dimensional (resp. $(m+k)$-dimensional)  polytopes and define two functions $g_1$ and $g_2$ on $(\mathcal{P}^m)^m\times (\mathcal{P}^{m+k})^k$ via 
\begin{eqnarray*}
g_1(Q_1,\dots,Q_m,P_1,\dots,P_k) &:=& \textup{MV}_{m+k}(Q_1,\dots,Q_m,P_1,\dots,P_k)\\
g_2(Q_1,\dots,Q_m,P_1,\dots,P_k) &:=& \textup{MV}_{m}(Q_1,\dots,Q_m)\cdot\textup{MV}_{k}(\pi(P_1),\dots,\pi(P_k))\ .
\end{eqnarray*}
Due to the properties of mixed volumes (see Paragraph~\ref{sec:MixedVolumes}) it is easy to see that $g_1$ and $g_2$ are invariant under changing the order of the $Q_i$ and under changing the order of the $P_j$. Furthermore it follows from (\ref{linearity}) that both functions are linear in each argument. 

Hence, for fixed $P_1, \dots, P_k$ the induced mappings
\[
\tilde{g}_i^{(P_1,\dots,P_k)}(Q_1,\dots,Q_m) :=  g_i(Q_1,\dots,Q_m,P_1,\dots,P_k)  \qquad (i=1,2)
\]
are symmetric and multilinear, and analogously, for fixed $Q$, the mappings
\[
\bar{g}^{(Q)}_i(P_1,\dots,P_k) :=  g_i(Q,\dots,Q,P_1,\dots,P_k) \qquad (i=1,2)
\]
are symmetric and multilinear.
For any semigroups $A,B$ and any symmetric multilinear function
$f: A^n \rightarrow B$, it follows from an inclusion-exclusion argument (see \cite[Theorem 3.7]{Ewald}) that
\begin{equation} \label{multilinear}
f(a_1,\dots,a_n)= \frac{1}{n!}\sum_{1\leq i_1< \cdots <i_q\leq n}(-1)^{n-q} f(a_{i_1}+ \cdots+a_{i_q},\dots,a_{i_1}+\cdots+a_{i_q})\ .
\end{equation}
Hence we have for $i=1,2$ that
\begin{align*}
& g_i(Q_1,\dots,Q_m,P_1,\dots,P_k) \\
=\ &\tilde{g}_i^{(P_1,\dots,P_k)}(Q_1,\dots,Q_m) \\
=\ &\frac{1}{m!}\sum_{1\leq i_1<\cdots<i_q\leq m} (-1)^{m-q} \ 
                         \tilde{g}_i^{(P_1,\dots,P_k)} (Q_{i_1}+\cdots+Q_{i_q},\dots,Q_{i_1}+\cdots+Q_{i_q}) \\
=\ &\frac{1}{m!}\sum_{1\leq i_1<\cdots<i_q\leq m}(-1)^{m-q}\  
                          \bar{g}^{(Q_{i_1}+\cdots+Q_{i_q})}_i(P_1,\dots,P_k) \ .
\end{align*}
Since we can expand $\bar{g}^{(Q_{i_1}+\cdots+Q_{i_q})}_i(P_1,\dots,P_k)$ by using (\ref{multilinear}) as well, we see that both functions $g_1$ and $g_2$ are fully determined by their images of tuples of polytopes where  $Q_1=\cdots=Q_m=Q$ and $P_1=\cdots=P_k=P$. This proves the Lemma.
\end{proof}

Another technical tool which is employed in a subsequent proof is the following Lemma. This goes back to an idea of Emiris and Canny \cite {EmirisCanny} to use linear programming and the formula \ref{explicitMV_2} to compute the mixed volume. 
\begin{lemma} \label{LiftingLemma}
Given polytopes $P_1,\dots,P_k$ $\subset \R^k$ and lifting vectors $\mu_1,\dots,\mu_k\in\R^k_{\geq 0}$. Denote the vertices of $P_i$ by $v^{(i)}_1,\dots,v^{(i)}_{r_i}$ and choose one edge $e_i=[v^{(i)}_{t_i}, v^{(i)}_{l_i}]$ from each $P_i$. Then 
$C:=e_1+\dots+e_k$ is a mixed cell of the mixed subdivision induced by the liftings $\mu_i$ if and only if
\begin{enumerate}
\item[i)] The edge matrix $E:=V_a - V_b$ is non-singular (where $V_a:=(v^{(1)}_{t_1},\dots,v^{(k)}_{t_k})$ and $V_b:=(v^{(1)}_{l_1},\dots,v^{(k)}_{l_k})$) \ and 
\item[ii)] For all polytopes $P_i$ and  all vertices $v^{(i)}_s$ of $P_i$ which are not in $e_i$ we have:
\begin{equation} 
\left( \langle \mu_1-\mu_i,\vec{e_1}\rangle,\dots,\langle\mu_k-\mu_i,\vec{e_k}\rangle \right)\cdot E^{-1} \cdot \left(v^{(i)}_{l_i}-v^{(i)}_s \right) \geq 0 \label{LiftingCondition}
\end{equation}
where $\vec{e_i}=v^{(i)}_{t_i}-v^{(i)}_{l_i}$.
\end{enumerate}
\end{lemma}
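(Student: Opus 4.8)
The plan is to translate the combinatorial statement ``$C$ is a mixed cell of the coherent subdivision induced by $\mu$'' into a condition on a single lower normal vector $w\in\R^k$, and then to read off (i) and (ii) from the resulting linear algebra. Recall from Paragraph~\ref{sec:MixedVolumes} that a cell of the subdivision induced by $\mu$ is the projection of a facet of the lower hull of $\hat{P}=\hat{P}_1+\dots+\hat{P}_k$, and that such a facet is cut out by an inward normal of the form $(w,1)$. Since the lift of $q\in P_i$ is $(q,\langle\mu_i,q\rangle)$, we have $\langle(w,1),(q,\langle\mu_i,q\rangle)\rangle=\langle w+\mu_i,q\rangle$, so the face of $\hat{P}_i$ minimal in direction $(w,1)$ projects onto the face $\partial_{w+\mu_i}P_i$ of $P_i$. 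Consequently, $C=e_1+\dots+e_k$ is a mixed cell precisely when $C$ is full-dimensional and there is a $w$ with $e_i=\partial_{w+\mu_i}P_i$ for every $i$, i.e. both endpoints of each $e_i$ are $(w+\mu_i)$-minimizers on $P_i$ and no other vertex beats them.

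First I would settle condition (i). The Minkowski sum $e_1+\dots+e_k$ has dimension equal to the rank of its edge directions $\vec{e_1},\dots,\vec{e_k}$, which are exactly the columns of $E=V_a-V_b$; hence $C$ is full-dimensional if and only if $E$ is non-singular. Moreover, requiring both endpoints of $e_i$ to lie on the minimal face forces $\langle w+\mu_i,\vec{e_i}\rangle=0$ for all $i$, i.e. the linear system $E^{\top}w=b$ with $b_i=-\langle\mu_i,\vec{e_i}\rangle$. Non-singularity of $E$ is precisely what makes the candidate normal $w^{*}=(E^{-1})^{\top}b$ exist and be unique; this is the role of (i).

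Next I would convert the remaining ``no other vertex beats the edge'' requirement into (ii). For the unique $w^{*}$ above, $e_i$ is the $(w^{*}+\mu_i)$-minimal face exactly when, for every vertex $v^{(i)}_s$ of $P_i$ not on $e_i$, one has $\langle w^{*}+\mu_i,\,v^{(i)}_{l_i}-v^{(i)}_s\rangle\le 0$. The key computation is to substitute $w^{*}=(E^{-1})^{\top}b$ and regroup. Writing $r^{(i)}$ for the row vector with entries $\langle\mu_j-\mu_i,\vec{e_j}\rangle$, one checks that $r^{(i)}=-b^{\top}-\mu_i^{\top}E$, whence $r^{(i)}E^{-1}=-b^{\top}E^{-1}-\mu_i^{\top}=-(w^{*})^{\top}-\mu_i^{\top}$, and therefore
\[
r^{(i)}\,E^{-1}\,\bigl(v^{(i)}_{l_i}-v^{(i)}_s\bigr)=-\langle w^{*}+\mu_i,\;v^{(i)}_{l_i}-v^{(i)}_s\rangle .
\]
Thus the minimality inequality is equivalent to (\ref{LiftingCondition}), and combining the two directions finishes the proof.

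I expect the main obstacle to be purely the bookkeeping in this last computation: one must keep the lifting $\mu_i$ attached to the $i$-th polytope straight, which is exactly why the differences $\mu_j-\mu_i$ (rather than the raw $\mu_j$) appear in (\ref{LiftingCondition}). A secondary point requiring care is the passage between the non-strict inequalities in (ii) and the requirement that $e_i$ be \emph{exactly} the $(w^{*}+\mu_i)$-minimal face: under the standing assumption that $\mu$ induces a mixed subdivision, no extra vertex can be tied without enlarging the type of the cell beyond $(1,\dots,1)$, so the closed conditions (i)--(ii) do single out the genuine mixed cells $C=e_1+\dots+e_k$.
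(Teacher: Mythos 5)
Your proposal is correct, and the linear algebra checks out: with $b_i=-\langle\mu_i,\vec{e_i}\rangle$ one indeed gets $r^{(i)}E^{-1}=-(w^{*}+\mu_i)^{\top}$, so \eqref{LiftingCondition} is exactly the statement that no vertex off $e_i$ beats $e_i$ in the direction $w^{*}+\mu_i$. However, you take a genuinely different route from the paper. The paper follows Emiris and Canny: it tests whether the midpoint $\hat{m}$ of the lifted cell lies on the lower envelope by setting up a linear program, writes that LP in standard form $\max\{c^Tx : Ax=b,\ x\ge 0\}$, inverts the basis matrix $A_B$ explicitly, and reads off condition \eqref{LiftingCondition} from the reduced-cost optimality criterion $c_N^T-c_B^T A_B^{-1}A_N\le 0$. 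You instead argue directly with the defining geometry of coherent subdivisions: a cell is the projection of a lower facet with inward normal $(w,1)$, the equations $\langle w+\mu_i,\vec{e_i}\rangle=0$ determine a unique candidate $w^{*}=(E^{-1})^{\top}b$ precisely when $E$ is non-singular (which simultaneously accounts for full-dimensionality of $C$, i.e.\ condition (i)), and the remaining inequalities are the support conditions for $w^{*}$. Your argument is shorter, more self-contained, and makes transparent \emph{why} the differences $\mu_j-\mu_i$ appear; what the paper's LP formulation buys is the direct link to the computational method of Emiris--Canny and to the duality framework the authors cite. One shared caveat: both your proof and the paper's use non-strict inequalities, under which $C$ is a priori only guaranteed to be contained in a cell of the induced subdivision rather than to equal one; you at least flag this tie-breaking issue explicitly, whereas the paper does not address it.
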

Before beginning with the proof we start with some auxiliary considerations about how to apply linear programming (\emph{LP}) here. In \cite{EmirisCanny} it is shown that the test, if a cell lies on the lower envelope of the lifted Minkowski sum can be formulated as a linear program.  Let $\hat{m}_i\in \R^{k+1}$ denote the midpoint of the lifted edge $\hat{e}_i$ of $\hat{P}_i$ such that $\hat{m}=\hat{m}_1+\dots + \hat{m}_k$ is an interior point of the Minkowski sum $\hat{e}_1+\dots+\hat{e}_k$. Consider the linear program 
\begin{align}
\text{maximize } s &\in \R_{\geq 0} \label{LP} \\ 
\text{s.t. } \hat{m} &- (0,\dots,0,s) \in \hat{P}_1+\dots+\hat{P}_k\nonumber \ .
\end{align}
If we denote the vertices of $P_i$ by $v^{(i)}_{1},\dots,v^{(i)}_{r_i}$ this can be written as
\begin{align*}
\text{maximize } s  & \in \R_{\geq 0} \\
\text{s.t. }      \hat{m}-(0,\dots,0,s) &= \displaystyle\sum_{i=1}^{k} \sum_{j=1}^{r_i} \lambda^{(i)}_{j} \hat{v}^{(i)}_{j}  \\
          \sum_{j=1}^{r_i} \lambda^{(i)}_{j} &=1 \quad \forall\, i=1,\dots,n  \\
          \lambda^{(i)}_{j} &\geq 0 \quad \forall\, i,j   \ .\\
\end{align*}
$s$ measures the distance of $\hat{m}$ to the lower envelope of the Minkowski sum. Hence $\hat{m}$ lies on the lower envelope of $\hat{P}_1+\dots+\hat{P}_k$ if and only if the optimal value of (\ref{LP}) is zero.

In standard matrix form, 
the linear program (\ref{LP}) can be written as $\max \{c^T x\, : \, Ax=b, x \ge 0 \}$ with 
\begin{eqnarray*}
A&=& \begin{pmatrix}
     v^{(1)}_{1} & \dots & v^{(1)}_{r_1}  & \dots & \dots &  v^{(k)}_{1} & \dots & v^{(k)}_{r_{k}} & \dnull_{k}  \\[0.1cm]
\langle \mu_1,v^{(1)}_{1} \rangle & \dots &\langle \mu_1,v^{(1)}_{r_1} \rangle&\dots& \dots &\langle \mu_k,v^{(k)}_{1} \rangle& \dots &\langle \mu_k,v^{(k)}_{r_k} \rangle& 1     \\[0.1cm]
                & \deins^T_{r_1} &         & \dnull^T_{r_2} & \dots &                & \dnull^T_{r_{k}} &             &0 \\[0.1cm]
                & \dnull^T_{r_1} &         & \deins^T_{r_2} & \dots &                & \dnull^T_{r_{k}} &             &0 \\[0.1cm]
                & \vdots         &         &                & \ddots &                & \vdots            &            &\vdots \\[0.1cm]
                & \dnull^T_{r_1} &         & \dnull^T_{r_2} & \dots &                & \deins^T_{r_{kn}}  &            &0 \\[0.1cm]
     \end{pmatrix} \, , \\
b^T&=&(\hat{m}, \deins^T_{k})\ \in \R^{2k+1} \, , \\
c^T&=&(\dnull^T_{r_1+\dots +r_k},1) \ \in \R^{r_1+\dots+r_k+1}
\end{eqnarray*}
and variables 
$x^T = (\lambda^{(1)}_{1},\dots,\lambda^{(1)}_{r_1}, \dots \dots ,\lambda^{(k)}_{1},\dots,\lambda^{(k)}_{r_{k}},s)\ \in \R^{r_1+\dots+r_k+1}$.
Here $\dnull_{k}$ and $\deins_k$ denote the all-0-vector and the all-1-vector in $\R^k$, respectively.
In this notation the point $\hat{m}$ from \eqref{LP} corresponds to $\bar{x}=(\lambda^{(1)}_{1},\dots ,\lambda^{(k)}_{r_{k}},s)$ where $s=0$ and $\lambda^{(i)}_{j}=\frac{1}{2}$ if the edge $\hat{e}_i$ contains the vertex $\hat{v}^{(i)}_j$ and $\lambda^{(i)}_{j}=0$ otherwise. 

Given a feasible vertex $\bar{x}\geq 0$ of the LP, let $B$ be a
(not necessarily unique) choice of columns of $A$ such that the submatrix $A_B$ consisting of these columns satisfies $A^{-1}_B \cdot b=\bar{x}$. Let $A_N$ be the submatrix of $A$ consisting of the remaining columns and define $c_B$ and $c_N$ in the same way. By LP duality (see, e.g. \cite{Groetschel})
$\bar{x}$ is optimal if and only if
\begin{equation}\label{SimplexCriterion}
c_N^T-c_B^T\cdot A^{-1}_B\cdot A_N \leq 0 \quad \text{(componentwise)} \ .
\end{equation}

To prove Lemma~\ref{LiftingLemma} we assume that $\bar{x}$ is optimal and deduce conditions on the lifting vectors $\mu_i$ by using the inequality \eqref{SimplexCriterion}.

\medskip\noindent
{\it Proof of Lemma \ref{LiftingLemma}.}
Note that $C$ is full-dimensional and hence has a non-zero volume if and only if  $E$ is non-singular. In the following only this case will be  considered. To simplify the notation write $\mu(V)$ to denote $(\langle \mu_1,v_1\rangle,\dots,\langle\mu_k,v_k\rangle)$.

We know that $C$ is a mixed cell if and only if the following $\bar{x}$ is the optimal solution to the linear program defined above:
\[
\bar{x}=(\lambda^{(1)}_{1},\dots,\lambda^{(k)}_{r_{k}},0) \text{ where } \lambda^{(i)}_{j}=\begin{cases} \frac{1}{2}, & j\in \{t_i, l_i\} \\ 0, & \text{else } \end{cases} \ .
\]
The submatrices of $A$ corresponding to $\bar{x}$ are 
\[
A_B=\begin{pmatrix}V_a                      &  V_b                     & \dnull_k \\
                   \mu(V_a)                 & \mu(V_b)                 & 1        \\
                   \textup{Id}_{k}           & \textup{Id}_{k}           & \dnull_k  
     \end{pmatrix} 
\quad\text{and}\quad
A_N=\begin{pmatrix}v^{(i)}_s\\ \mu_r\cdot v^{(i)}_s \\ \xi_i    \end{pmatrix}_{\begin{smallmatrix}1\leq i\leq k \\ 1\leq s\leq r_i \\ s\neq t_i,l_i\end{smallmatrix}} 
\]
where $\xi_i$ denotes the $i^{\text{th}}$ unit vector. Since 
\[
A_B^{-1}=\begin{pmatrix} E^{-1}          & \dnull_k                 & -E^{-1}\cdot V_b \\
                        -E^{-1}          & \dnull_k                 &  E^{-1}\cdot V_a \\
        -\mu(E)\cdot E^{-1}             & 1     & \mu(E)\cdot E^{-1}\cdot V_b-\mu(V_b)
         \end{pmatrix}
\]
and $c_N = (0,\dots,0) $ the criterion (\ref{SimplexCriterion}) implies that $\bar{x}$ is optimal if
and only if
\[
(0,\dots,0,1)\cdot A_B^{-1}\cdot A_N \geq 0 \ .
\]
But a single entry of the vector on the left can be explicitly computed as
\[
-\left(\mu(E)\cdot E^{-1}\right)\cdot v^{(i)}_s+\mu_r\cdot v^{(i)}_s+\left(\mu(E)\cdot E^{-1}\cdot V_b-\mu(V_b)\right)\cdot \xi_i
\]
which equals the left hand side of (\ref{LiftingCondition}).
\hfill $\Box$
\medskip

Note that (\ref{LiftingCondition}) is linear in the $\mu_j$. Hence, for a given a choice of edges 
this condition defines a cone of lifting vectors which induce 
a mixed subdivision that contains our chosen cell as a mixed cell.

\section{Application of the BKK theory on the graph embedding problem}\label{sec:BKK}

Our goal is to apply Bernstein's results to give bounds on the number of embeddings of
Laman graphs.
A first observation shows that for the formulation \eqref{SoE} the Bernstein bound is not
tight. Namely,
the system (\ref{SoE}) allows to choose a direction $w$ that satisfies the conditions of Bernstein's Second Theorem~\ref{The:Bernstein2}. The choice $w=(0,0,0,0,-1,-1,\dots,-1)$ yields the face system
\[
\left. \begin{cases} x_1-c_1=0 \\ y_1-c_2=0 \\ x_2-(l_{1,2}-c_1)=0 \\ y_2-c_3=0 \\ x_i^2+y_i^2=0 \quad \forall [v_1,v_i], [v_2,v_i] \in E \\ (x_i-x_j)^2+(y_i-y_j)^2=0\quad \forall [v_i,v_j]\in E \text{ with } i,j \neq 1,2   \end{cases} \right\}
\]
which has $(x_1,y_1,\dots,x_{n},y_{n})=(c_1,c_2,l_{1,2}-c_1,c_3,1,i,1,i,\dots,1,i)$ as a solution with non-zero complex entries. So the mixed volume of the system in (\ref{SoE}) is a strict upper bound on the number of graph embeddings.

To decrease this degeneracy we apply an idea of Ioannis Emiris\footnote{Personal communication at EuroCG 2008, Nancy} (see \cite{EmirisThesis}). Surprisingly the introduction of new variables for common subexpressions, which increases the B\'ezout bound, can decrease the BKK bound. Here we introduce for every  $i=1,\dots,n$ the variable 
$s_i$ together with the new equation $s_i=x_i^2+y_i^2$. This leads to the following system of equations.
\begin{equation}\label{SubSoE}
\left. \begin{cases} x_1-c_1=0 \\ y_1-c_2=0 \\ x_2-(l_{1,2}-c_1)=0 \\ y_2-c_3=0 \\ s_i+s_j-2x_ix_j-2y_iy_j-l_{i,j}^2 =0 \quad \forall [v_i,v_j]\in E-\{[v_1,v_2]\} \\
s_i -x_i^2 -y_i^2=0 \quad \forall i=1,\dots,n \end{cases} \right\}
\end{equation}
Experiments show that the system \eqref{SubSoE} is still not generic in the sense of Theorem~\ref{The:Bernstein2} for every underlying minimally rigid graph. 
Hence the upper bound on the number of embeddings given by the mixed volume 
might not be tight in every case.

\subsection{Henneberg I graphs}
For this simple class of Laman graphs the mixed volume bound is tight as we will demonstrate below. Our proof exploits the inductive structure of Henneberg I graphs which is why it cannot be used for Henneberg II graphs. 
\begin{lemma}\label{HennebergILemma}
For a Henneberg I graph on $n$ vertices, the mixed volume of system $\eqref{SubSoE}$ equals $2^{n-2}$.
\end{lemma}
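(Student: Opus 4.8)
The plan is to induct on the number of vertices $n$, exploiting the fact that a Henneberg I graph on $n$ vertices is obtained from one on $n-1$ vertices by adjoining a single new vertex $v_n$ joined to two existing vertices, say $v_a$ and $v_b$. The base case is the triangle $G_3$ on three vertices, where one checks directly that the mixed volume of \eqref{SubSoE} equals $2^{1}=2$. The key structural observation is that in system \eqref{SubSoE}, introducing $v_n$ adds exactly three new equations (the two edge equations $s_n+s_a-2x_nx_a-2y_ny_a-l_{n,a}^2=0$ and $s_n+s_b-2x_nx_b-2y_ny_b-l_{n,b}^2=0$, together with the substitution equation $s_n-x_n^2-y_n^2=0$) and exactly three new variables $(x_n,y_n,s_n)$, while the equations and variables for the subgraph $G_{n-1}$ remain intact.

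First I would set up the notation so that the variables split into the ``old'' block (the $2(n-1)$-many coordinates and the $(n-1)$-many $s$-variables of $G_{n-1}$) and the ``new'' block $(x_n,y_n,s_n)$. The goal is to apply the Separation Lemma (Lemma~\ref{SeparationLemma}) with $Q_1,\dots,Q_m$ the Newton polytopes of the old equations (lying in the coordinate subspace indexed by the old variables) and $P_1,P_2,P_3$ the Newton polytopes of the three new equations. To invoke Lemma~\ref{SeparationLemma} literally, the crucial point is that the old polytopes must sit inside the subspace $\R^m$ spanned by the old coordinates: this holds precisely because none of the old equations involve $x_n,y_n,s_n$. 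With that in hand, the mixed volume factors as
\[
\MV(G_n)=\MV(\text{old system})\cdot\MV_3\bigl(\pi(P_1),\pi(P_2),\pi(P_3)\bigr),
\]
where $\pi$ projects onto the three new coordinates. By the induction hypothesis the first factor is $2^{n-2-1}=2^{n-3}$ (here I am treating $G_{n-1}$ as a Henneberg I graph on $n-1$ vertices), so it remains to show that the second factor equals $2$.

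The heart of the argument is therefore the local computation of $\MV_3$ of the three projected Newton polytopes. After projecting away the old variables, the two edge equations contribute the terms $s_n$, $x_n$, $y_n$ (the products $x_nx_a$ and $y_ny_a$ project to pure $x_n$ and $y_n$ monomials, since $x_a,y_a$ become constants), giving Newton polytopes that are triangles (or segments together with the origin) in the $(x_n,y_n,s_n)$-space, while the substitution equation $s_n=x_n^2+y_n^2$ contributes the Newton polytope $\conv\{0,2\!\cdot\!e_{x_n},2\!\cdot\!e_{y_n},e_{s_n}\}$. I would compute this three-dimensional mixed volume directly, either by the Minkowski-sum volume polynomial or by exhibiting a mixed subdivision and applying \eqref{explicitMV_2}; the answer $2$ reflects the two intersection points of a line with a circle, which is exactly the geometric content of a Henneberg I step (the new vertex lies on the intersection of two circles, but after fixing the frame one degree of freedom is already pinned). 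I expect the main obstacle to be the bookkeeping needed to apply Lemma~\ref{SeparationLemma} cleanly — in particular, verifying that the ambient dimensions line up so that the old polytopes genuinely lie in a coordinate subspace complementary to the new variables, and handling the four linear equations $h_1,\dots,h_4$ (which in the base triangle already fix several coordinates) so that they are absorbed correctly into the inductive setup rather than double-counted.
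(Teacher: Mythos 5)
Your proposal follows essentially the same route as the paper: induction along the Henneberg sequence starting from the triangle (mixed volume $2$), applying Lemma~\ref{SeparationLemma} to split off the three new equations in the three new variables, and computing the local $3$-dimensional mixed volume of the projected Newton polytopes to be $2$. The only nitpick is that the Newton polytope of $s_n-x_n^2-y_n^2$ is the triangle $\conv\{2e_{x_n},2e_{y_n},e_{s_n}\}$ without the origin (there is no constant term), and the projected edge equations give tetrahedra $\conv\{0,e_{x_n},e_{y_n},e_{s_n}\}$ rather than triangles; neither imprecision affects the value $2$.
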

\begin{proof}
Each Henneberg sequence starts with a triangle for which system \eqref{SubSoE} has mixed volume $2$. Starting from the triangle we consider a sequence of Henneberg I steps and show that the mixed volume doubles in each of these steps.

In a Henneberg I step we add one vertex $v_{n+1}$ and two edges $[v_r,v_{n+1}]$, $[v_q,v_{n+1}]$ with lengths $l_{r,n+1}$ and $l_{q,n+1}$. 
So our system of equations (\ref{SubSoE}) gets three new equations, namely
 \begin{eqnarray}
 s_{n+1} - x_{n+1}^2 - y_{n+1}^2 &=& 0 \label{eq_HI_1} \\
 s_r + s_{n+1}  - 2x_r x_{n+1} - 2y_r y_{n+1} - l_{r,n+1}^2 &=& 0 \label{eq_HI_2} \\
 s_q + s_{n+1}  - 2x_q x_{n+1} - 2y_q y_{n+1} - l_{q,n+1}^2 &=& 0. \label{eq_HI_3}
 \end{eqnarray}
In the new system of equations these three are the only polynomials involving $x_{n+1}$, $y_{n+1}$ and $s_{n+1}$, so Lemma~\ref{SeparationLemma} can be used to calculate the mixed volume separately. The projections of the Newton polytopes of equations \eqref{eq_HI_1}, \eqref{eq_HI_2} and \eqref{eq_HI_3} to the coordinates $x_{n+1}$, $y_{n+1}$ and $s_{n+1}$ are
\[
\text{conv}\left\{\begin{pmatrix}2&0&0\end{pmatrix}^T,  \begin{pmatrix}0&2&0\end{pmatrix}^T, \begin{pmatrix}0&0&1\end{pmatrix}^T \right\}  
\]
and twice
\[
\text{conv}\left\{\begin{pmatrix}1&0&0\end{pmatrix}^T,  \begin{pmatrix}0&1&0\end{pmatrix}^T, \begin{pmatrix}0&0&1\end{pmatrix}^T, \begin{pmatrix}0&0&0\end{pmatrix}^T \right\}\ .
\]
The mixed volume of these equals $2$. So by Lemma \ref{SeparationLemma} the mixed volume of the new system is twice the mixed volume of the system before the Henneberg I step.
\end{proof}
To get two new embeddings in every Henneberg I step we choose the new edge lengths to be almost equal to each other and much larger then all previous edge lengths (larger then the sum of all previous is certainly enough). 

\begin{cor}[Borcea and Streinu \cite{BorceaStreinu}] \label{TheoremHI}
The number of embeddings of Henneberg I graph frameworks is less than or equal to $2^{n-2}$ and this bound is sharp.
\end{cor}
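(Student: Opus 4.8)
The plan is to obtain both halves of the statement from Lemma~\ref{HennebergILemma}. For the upper bound I would first make the embedding--solution dictionary precise. Pinning down $v_1$ and the placement of the edge $[v_1,v_2]$ as in \eqref{SoE} factors out the rigid motions, so embeddings modulo rigid motions correspond bijectively to the real solutions of \eqref{SoE} (reflections across the line $v_1v_2$ being counted as distinct embeddings). Introducing the variables $s_i=x_i^2+y_i^2$ then sets up a bijection between the solutions of \eqref{SoE} and those of \eqref{SubSoE}: each solution of \eqref{SoE} extends uniquely by $s_i:=x_i^2+y_i^2$, and each solution of \eqref{SubSoE} restricts back since its last $n$ equations force $s_i=x_i^2+y_i^2$. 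Hence the number of embeddings equals the number of \emph{real} solutions of \eqref{SubSoE}, which is at most the number of its complex solutions. By the remark following Theorem~\ref{The:Bernstein} — the Newton polytopes of \eqref{SubSoE} all contain $0$ as a vertex, so the mixed volume bounds the solution count in $\C^{3n}$ and not only in $(\C^*)^{3n}$ — this complex count is at most the mixed volume, which is $2^{n-2}$ by Lemma~\ref{HennebergILemma}. This yields the inequality.

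For sharpness I would exhibit edge lengths realizing $2^{n-2}$ distinct real embeddings, by induction along a Henneberg~I sequence $G_3,\dots,G_n=G$. The base case is the triangle $G_3$: with $v_1,v_2$ fixed, $v_3$ lies at the intersection of the circle of radius $l_{1,3}$ about $v_1$ and the circle of radius $l_{2,3}$ about $v_2$, which for admissible lengths meet in two points (mirror images across $v_1v_2$), giving $2=2^{3-2}$ embeddings. For the inductive step, assume $G_{k-1}$ already has exactly $2^{k-3}$ distinct real embeddings for the chosen lengths. A Henneberg~I step attaches $v_k$ to two vertices $v_r,v_q$; in any fixed embedding of $G_{k-1}$ the admissible positions of $v_k$ are the intersection points of the two circles about $v_r,v_q$ of radii $l_{r,k},l_{q,k}$, and two transversal intersections occur exactly when $|l_{r,k}-l_{q,k}|<|v_r-v_q|<l_{r,k}+l_{q,k}$.

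The crux is that a \emph{single} choice of $(l_{r,k},l_{q,k})$ must produce two real intersections simultaneously for all $2^{k-3}$ embeddings of $G_{k-1}$. I would control this uniformly: since all previously chosen edge lengths are fixed and finite and $G_{k-1}$ is connected, every embedding has diameter at most the sum of the edge lengths used so far, so over the finitely many embeddings the quantity $|v_r-v_q|$ ranges in a finite set of positive values bounded above by some $D$ (positivity holding for generic lengths, so that $v_r\neq v_q$). Taking $l_{r,k}\approx l_{q,k}\approx L$ with $L$ much larger than $D$ then forces $|l_{r,k}-l_{q,k}|<|v_r-v_q|<l_{r,k}+l_{q,k}$ for every embedding at once, so each of the $2^{k-3}$ embeddings extends in exactly two ways. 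These extensions are pairwise distinct — two extensions of one embedding differ in $v_k$, and extensions of different embeddings already differ on $G_{k-1}$ — so $G_k$ acquires $2\cdot 2^{k-3}=2^{k-2}$ distinct real embeddings. Matching this against the upper bound $2^{n-2}$ proves sharpness. The main obstacle is precisely this uniformity step: one must guarantee two real circle intersections across \emph{all} previously built embeddings from one pair of lengths, which is exactly why the boundedness of every embedding (from the fixed finite lengths) together with the ``large and nearly equal'' choice of new lengths is needed; by contrast the upper bound is nearly immediate once the embedding--solution correspondence and the $0\in\NP$ remark are in hand.
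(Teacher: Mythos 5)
Your proposal follows the paper's route exactly: the upper bound is read off from Lemma~\ref{HennebergILemma} via Bernstein's Theorem, and your sharpness induction is precisely the paper's one-sentence remark after that lemma (choose the two new lengths almost equal and larger than the sum of all previous ones, so both circle intersections are real in every one of the $2^{k-3}$ embeddings simultaneously), just written out in full. The embedding--solution dictionary and the uniformity argument for the circle intersections are fine.

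One step in your upper-bound argument is wrong as stated, however. You invoke the remark after Theorem~\ref{The:Bernstein} by claiming that \emph{all} Newton polytopes of system \eqref{SubSoE} contain $0$ as a vertex. That is false for the substitution equations: $\NP(s_i-x_i^2-y_i^2)=\conv\{\xi_{s_i},2\xi_{x_i},2\xi_{y_i}\}$ has no constant term and does not contain the origin, so the Li--Wang argument does not directly let you count roots in $\C^{3n}$ by the mixed volume of \eqref{SubSoE}. (The paper's remark is made only for system \eqref{SoE}.) The gap is real but easily repaired: since a framework has only finitely many embeddings, a generic rotation/translation of the coordinate frame (equivalently, a generic choice of $c_1,c_2,c_3$) ensures that every real embedding has all $x_i,y_i\neq 0$, hence all $s_i=x_i^2+y_i^2>0$, so every real solution you need to count already lies in $(\C^*)^{3n}$ and Bernstein's Theorem applies as stated. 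With that substitute argument your proof is complete and coincides with the paper's.
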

Of course the elementary proof described in \cite{BorceaStreinu} of this statement does not need such heavy machinery as Bernstein's Theorem. The purpose of Lemma~\ref{HennebergILemma} is to show that the techniques described in this work apply here and that the BKK bound is tight in this case.

\subsection{Laman graphs on 6 vertices}\label{sec:6Vertices}
The first Laman graphs which are not constructable using only Henneberg I steps arise on $6$ vertices. A simple case analysis shows that up to isomorphisms there are only two such graphs, the Desargues graph and $K_{3,3}$ (see Figure~\ref{DesarguesAndK33}). 

\begin{figure}[h]
  \begin{center}
  \includegraphics[scale=0.18]{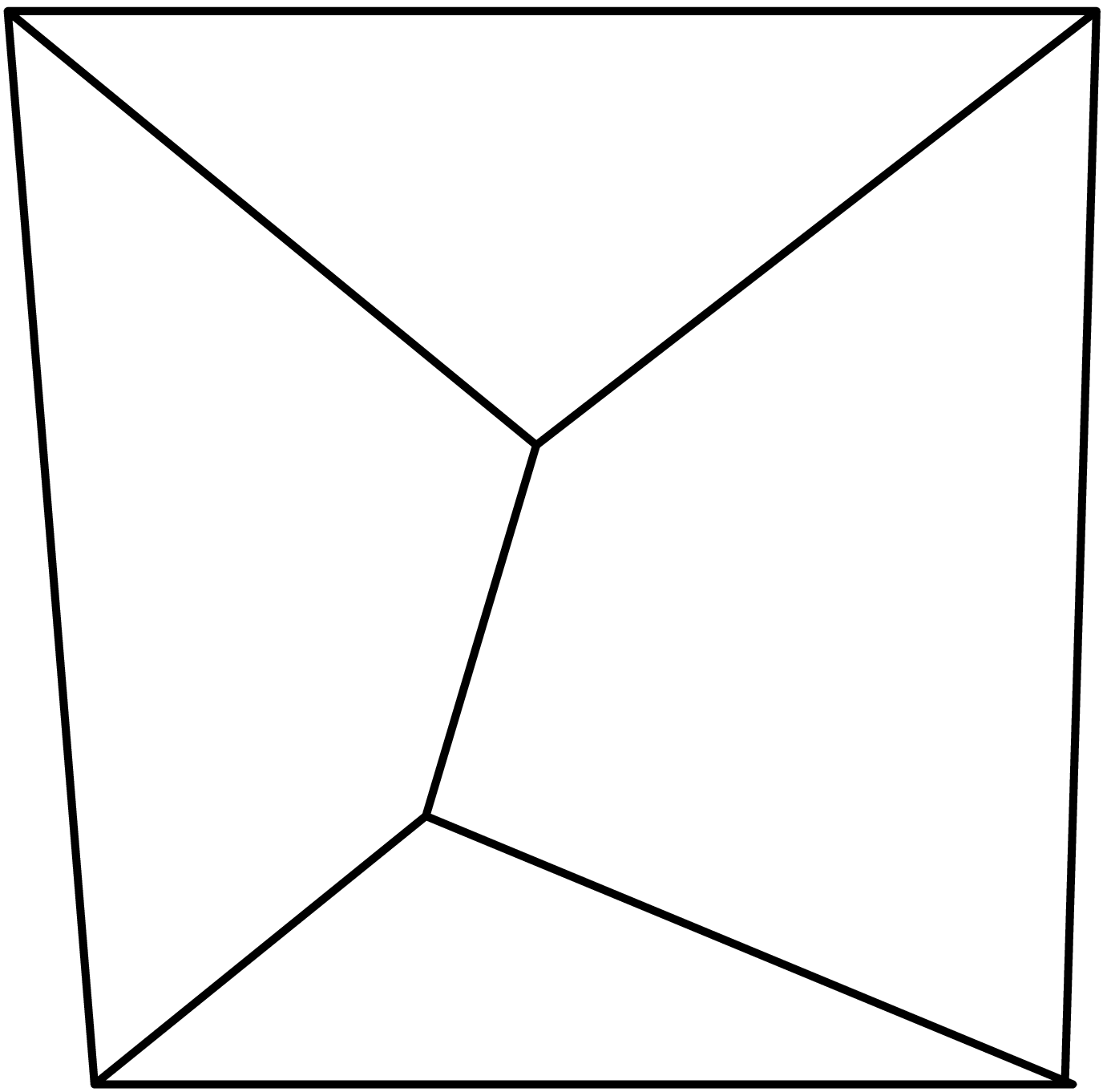} \qquad \includegraphics[scale=0.18]{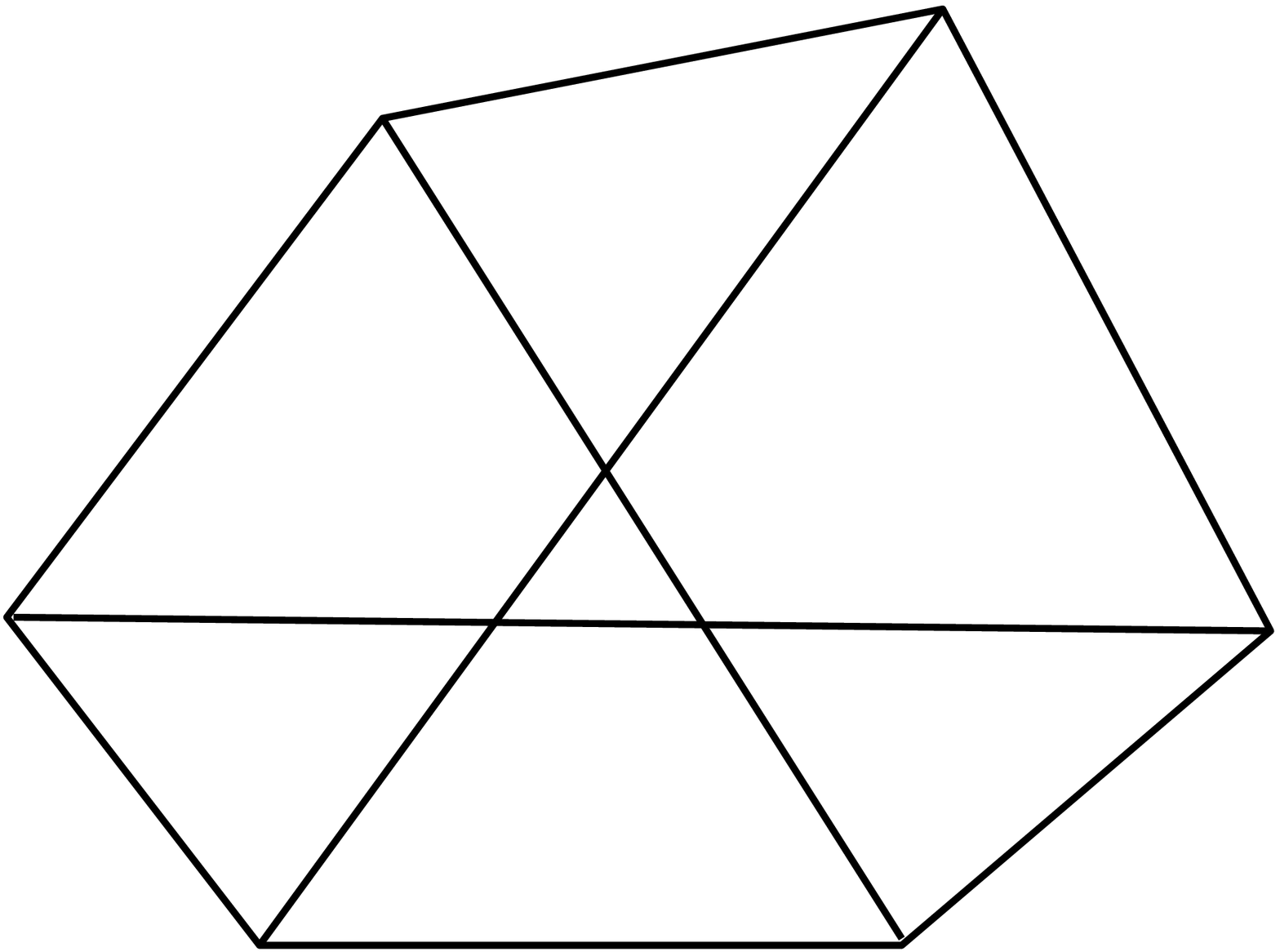}
  \end{center}
  \caption{Left: Desargues graph. Right: $K_{3,3}$.}
  \label{DesarguesAndK33}
\end{figure}

The number of embeddings of both graphs has been studied in detail. The Desargues graph is studied in \cite{BorceaStreinu} where the authors show that there can only be $24$ embeddings and that there exists a choice of edge lengths giving $24$ different embeddings. This is obtained by investigating the curve that is traced out by one of the vertices after one incident edge is removed. 
Husty and Walter \cite{HustyWalter} apply resultants to show that $K_{3,3}$ can have up to $16$  embeddings  and give as well specific edge lengths leading to $16$ different embeddings.\footnote{This corrects an earlier version of this paper.}

Both approaches rely on the special combinatorial structure of the specific graphs. The general bound in \cite{BorceaStreinu} for the number of embeddings of a graph with $6$ vertices yields $\binom{2\cdot(6-2)}{6-2}=70$. In this case the BKK bound gives a closer estimate. Namely the mixed volume of the system \eqref{SubSoE} (which uses the substitution trick to remove degeneracies) can be shown to be
 $32$  for  both graphs.\footnote{We used the PHCpack by Jan Verschelde for our mixed volume calculations, see \cite{phc}.}

\subsection{General case}
For the classes discussed above (Henneberg I, graphs on six vertices) as well as some other special cases,
the BKK bound on the number of embeddings resembles or even improves the general bound of $\binom{2n-4}{n-2} $. 
For the general case, the mixed volume approach for the system (\ref{SoE}) without the substitutions suggested by Emiris provides a simple,
but very weak bound. However, it may be of independent interest that  the mixed volume can be exactly determined as a function of $n$ and that in particular the value is independent of the structure of the Laman graph.  
\begin{theorem} \label{GeneralBound}
For any Laman graph on $n$ vertices, the mixed volume of the initial system \eqref{SoE} is exactly $4^{n-2}$.
\end{theorem}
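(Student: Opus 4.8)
The plan is to strip off the four affine equations $h_1,\dots,h_4$ with the Separation Lemma and then reduce the remaining mixed volume to a matching problem controlled by the Laman sparsity condition. First I would record the Newton polytopes. Since $c_1,c_2,c_3\neq 0$, the polytopes $\NP(h_1),\dots,\NP(h_4)$ are the coordinate unit segments $[\mathbf 0,e_{x_1}]$, $[\mathbf 0,e_{y_1}]$, $[\mathbf 0,e_{x_2}]$, $[\mathbf 0,e_{y_2}]$, all contained in the subspace $\R^4=\mathrm{span}(e_{x_1},e_{y_1},e_{x_2},e_{y_2})$. For an edge $e_k=[v_i,v_j]$ the polynomial $h_k$ expands to $x_i^2-2x_ix_j+x_j^2+y_i^2-2y_iy_j+y_j^2-l_{i,j}^2$, and since the cross-term exponents $e_{x_i}+e_{x_j}$ and $e_{y_i}+e_{y_j}$ are midpoints of the squared exponents, $\NP(h_k)=\conv\{\mathbf 0,2e_{x_i},2e_{x_j},2e_{y_i},2e_{y_j}\}$.

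Applying Lemma~\ref{SeparationLemma} with $m=4$, $k=2n-4$, taking the four segments as the $Q_i\subset\R^4$ and the $2n-4$ edge simplices as the $P_i$, and using that the mixed volume of the four coordinate segments is $\MV_4=|\det(e_{x_1},e_{y_1},e_{x_2},e_{y_2})|=1$, I obtain
\[
\MV_{2n}(\NP(h_1),\dots,\NP(h_{2n}))=\MV_{2n-4}\big(\pi(\NP(h_5)),\dots,\pi(\NP(h_{2n}))\big),
\]
where $\pi$ forgets the coordinates $x_1,y_1,x_2,y_2$. A vertex $2e_c$ of an edge simplex survives under $\pi$ exactly when $c$ has index $\ge 3$, so $\pi(\NP(h_k))=2\cdot\Delta_k$ with $\Delta_k:=\conv(\{\mathbf 0\}\cup\{e_c:c\in S_k\})$ and $S_k$ the set of free coordinates among $x_i,y_i,x_j,y_j$. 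Because $[v_1,v_2]$ is the only edge with both endpoints in $\{v_1,v_2\}$ and it has been removed, each $S_k$ is nonempty. Pulling the scalar $2$ out of each of the $2n-4$ arguments by multilinearity \eqref{linearity} produces the factor $2^{2n-4}=4^{n-2}$, so it remains to show $\MV_{2n-4}(\Delta_5,\dots,\Delta_{2n})=1$.

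To evaluate this I would read $\Delta_k$ as the Newton polytope of a generic affine-linear form $g_k=b_k+\sum_{c\in S_k}a_{k,c}\,x_c$. By the exactness assertion of Bernstein's Theorem~\ref{The:Bernstein}, $\MV_{2n-4}(\Delta_5,\dots,\Delta_{2n})$ equals the number of solutions in $(\C^*)^{2n-4}$ of the generic square linear system $g_5=\dots=g_{2n}=0$. Such a system has at most one solution, and for generic coefficients that solution lies in the torus; hence this number is $1$ precisely when the support matrix $(a_{k,c})$ is generically nonsingular, i.e. when the bipartite graph $H$ joining the $2n-4$ edge-equations to the $2n-4$ free coordinates (with $e_k=[v_i,v_j]$ adjacent to the free coordinates among $x_i,y_i,x_j,y_j$) has a perfect matching.

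The main obstacle is producing this perfect matching from the Laman property. A perfect matching of $H$ is the same as an orientation of the edges in $E\setminus\{[v_1,v_2]\}$ toward free endpoints in which each free vertex $v_3,\dots,v_n$ receives exactly two edges (its two in-edges are then matched to $x_i$ and $y_i$), while edges meeting $v_1$ or $v_2$ are forced outward. I would invoke Hakimi's theorem on orientations with prescribed in-degrees: with $f(v_i)=2$ for $i\ge 3$ and $f(v_1)=f(v_2)=0$ one has $\sum_v f(v)=2n-4=|E\setminus\{[v_1,v_2]\}|$, and the orientation exists iff $i_{G-e}(X)\le\sum_{v\in X}f(v)$ for every vertex set $X$, where $i_{G-e}(X)$ counts the induced edges in $G-[v_1,v_2]$. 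If $X\cap\{v_1,v_2\}=\emptyset$ (the cases $|X|\le 1$ being trivial), the Laman bound gives $i_{G-e}(X)=i_G(X)\le 2|X|-3<2|X|=\sum_{v\in X}f(v)$; if $X$ meets $\{v_1,v_2\}$ in $j\ge 1$ vertices, enlarging to $X'=X\cup\{v_1,v_2\}$ (which contains the deleted edge, and where $|X'|=|X|+2-j$) yields $i_{G-e}(X)\le i_G(X')-1\le 2|X'|-4=2(|X|-j)=\sum_{v\in X}f(v)$. Thus Hakimi's condition holds in every case, the orientation and hence the matching exist, so $\MV_{2n-4}(\Delta_5,\dots,\Delta_{2n})=1$ and the total mixed volume equals $4^{n-2}$. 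The delicate points will be the at-most-one bound for the coordinate-simplex mixed volume and the bookkeeping $|X'|=|X|+2-j$ in the sparsity estimate; alternatively, one could certify the unique mixed cell directly through Lemma~\ref{LiftingLemma}.
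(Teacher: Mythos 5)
Your proof is correct, but it takes a genuinely different route from the paper's. The paper proves the two inequalities separately: the upper bound $4^{n-2}$ is simply the B\'ezout bound (the product of the degrees of the equations), and the matching lower bound is obtained by exhibiting a single mixed cell of volume $4^{n-2}$, namely $[\xi_1,0]+\dots+[\xi_4,0]+[2\xi_5,0]+\dots+[2\xi_{2n},0]$, and certifying it with the LP-based Lemma~\ref{LiftingLemma}. You instead compute the mixed volume exactly in one pass: Lemma~\ref{SeparationLemma} splits off the four linear equations (contributing a factor $1$), multilinearity extracts the factor $2^{2n-4}=4^{n-2}$ from the dilated coordinate simplices, and the remaining mixed volume of unit coordinate simplices is identified, via the exactness part of Bernstein's Theorem applied to a generic affine-linear system, with the indicator of a perfect matching between the $2n-4$ edge-equations and the $2n-4$ free coordinates. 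Both arguments ultimately rest on the same combinatorial fact --- an orientation of $E\setminus\{[v_1,v_2]\}$ in which every vertex other than $v_1,v_2$ has in-degree exactly two --- but the paper imports it from the Henneberg construction (via the references it cites), whereas you derive it from Hakimi's orientation theorem together with the Laman counts $i_G(X)\le 2|X|-3$; your bookkeeping $|X'|=|X|+2-j$ and the resulting bound $i_{G-e}(X)\le 2|X|-2j$ check out. What your route buys is independence from the Henneberg characterization (only the sparsity counts are used) and no separate appeal to B\'ezout; what the paper's route buys is an explicit mixed cell and a demonstration of the lifting-lemma machinery it develops, which is reused elsewhere in the paper.
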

\begin{proof}
The mixed volume of (\ref{SoE}) is at most the product of the degrees of the polynomial equations because it is less than or equal to the B\'{e}zout bound (see \cite{SturmfelsSolving}). To show that the mixed volume is at least this number we will use Lemma~\ref{LiftingLemma} to give a lifting that induces a mixed cell of volume $4^{n-2}$. 

For $i \in \{1, \dots, 4\}$ the Newton polytope $\NP(h_i)$ is a segment.
We claim that the polynomials $h_i$ can be ordered in a way such that  
for $i\geq 5$, $\NP(h_i)$ contains the edge $[0,2 \xi_i]$ where $\xi_i$ denotes the $i^{th}$ unit vector. 
To see this, note first that every polynomial $h_j$ ($1 \le j \le 2n$)
has a non-vanishing constant term and therefore $0 \in \NP(h_j)$.
For $i \in \{1, \dots, n\}$, each of the monomials $x^2_i$ and $y^2_i$ occurs in $h_j$ if and only if
the edge which is modeled by $h_j$ is incident to $v_i$.

Let $E' := E \setminus \{[v_1,v_2]\}$. The Henneberg construction of a Laman graph
allows to orient the edges such that in the graph
$(V,E')$ each vertex in $V \setminus \{v_1,v_2\}$ has exactly two incoming edges
(see \cite{JordanBerg,LeeStreinu}).
Namely, 
in a Henneberg I step the two new edges point to the new vertex. For a Henneberg II step we remember the direction of the deleted edge $\stackrel{\longrightarrow}{[v_r,v_s]}$ and let the new edge, which connects the new vertex to $v_s$, point to $v_s$. The other two new edges point to the new vertex.
(Figure~\ref{FigHennebergDirect} depicts this in an example.)

This orientation shows how to order the polynomials $h_5, \dots, h_{2n}$ in such a way
that the polynomials $h_{2i-1}$ and $h_{2i}$ model edges which are incoming edges of
the vertex $v_i$ within the directed graph. Remembering that the order of the variables was $(x_1,y_1,\dots,x_n,y_n)$ this implies that $2\xi_{2i-1}\in \NP(h_{2i-1})$ and $2\xi_{2i}\in \NP(h_{2i})$.
\begin{figure}[ht]
\setlength{\unitlength}{0.18pt}
	\begin{center}
		\begin{picture}(1200,550)(0,0)
			\put(10,40){\includegraphics[scale=0.18]{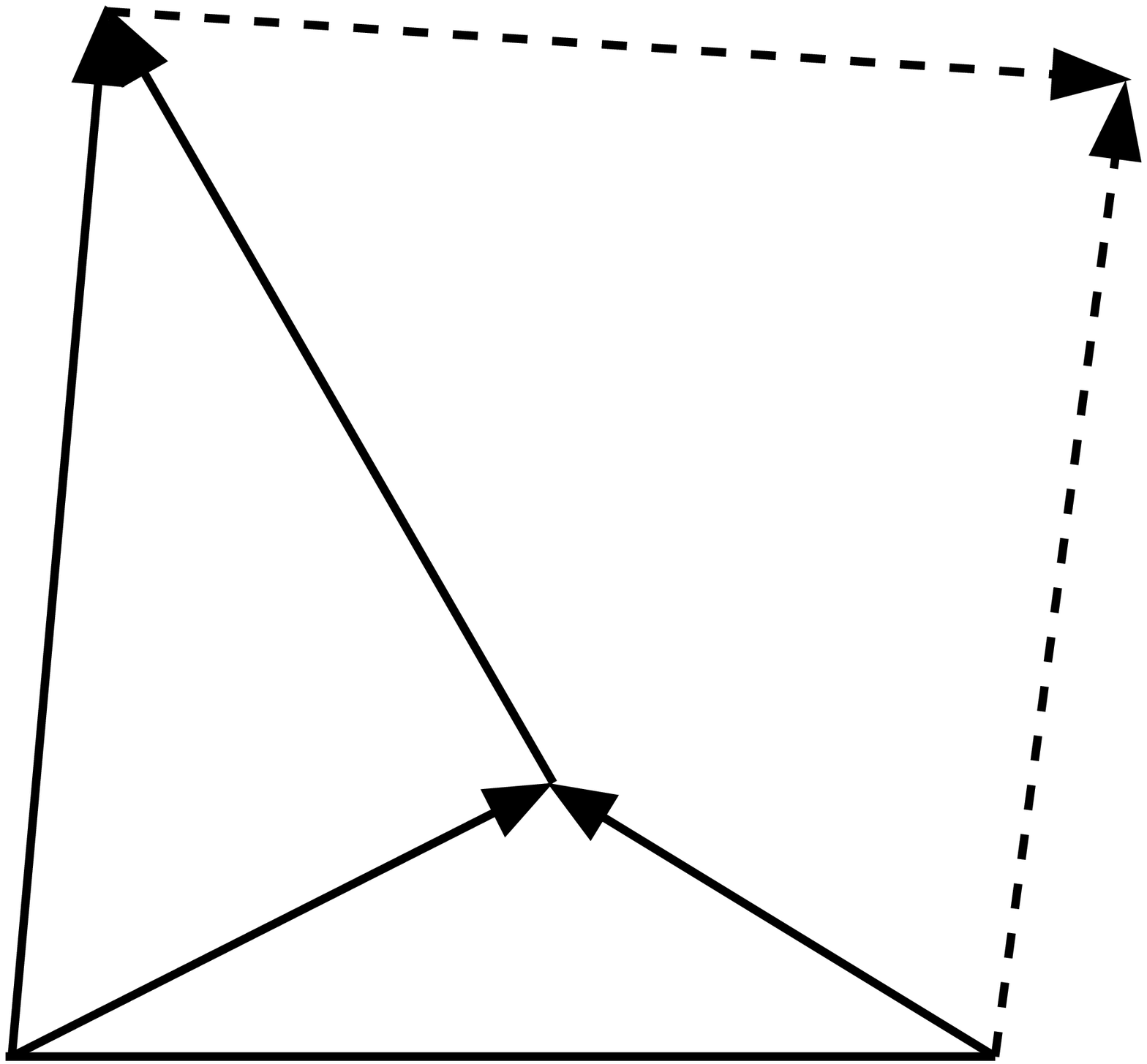}}
			\put(650,40){\includegraphics[scale=0.18]{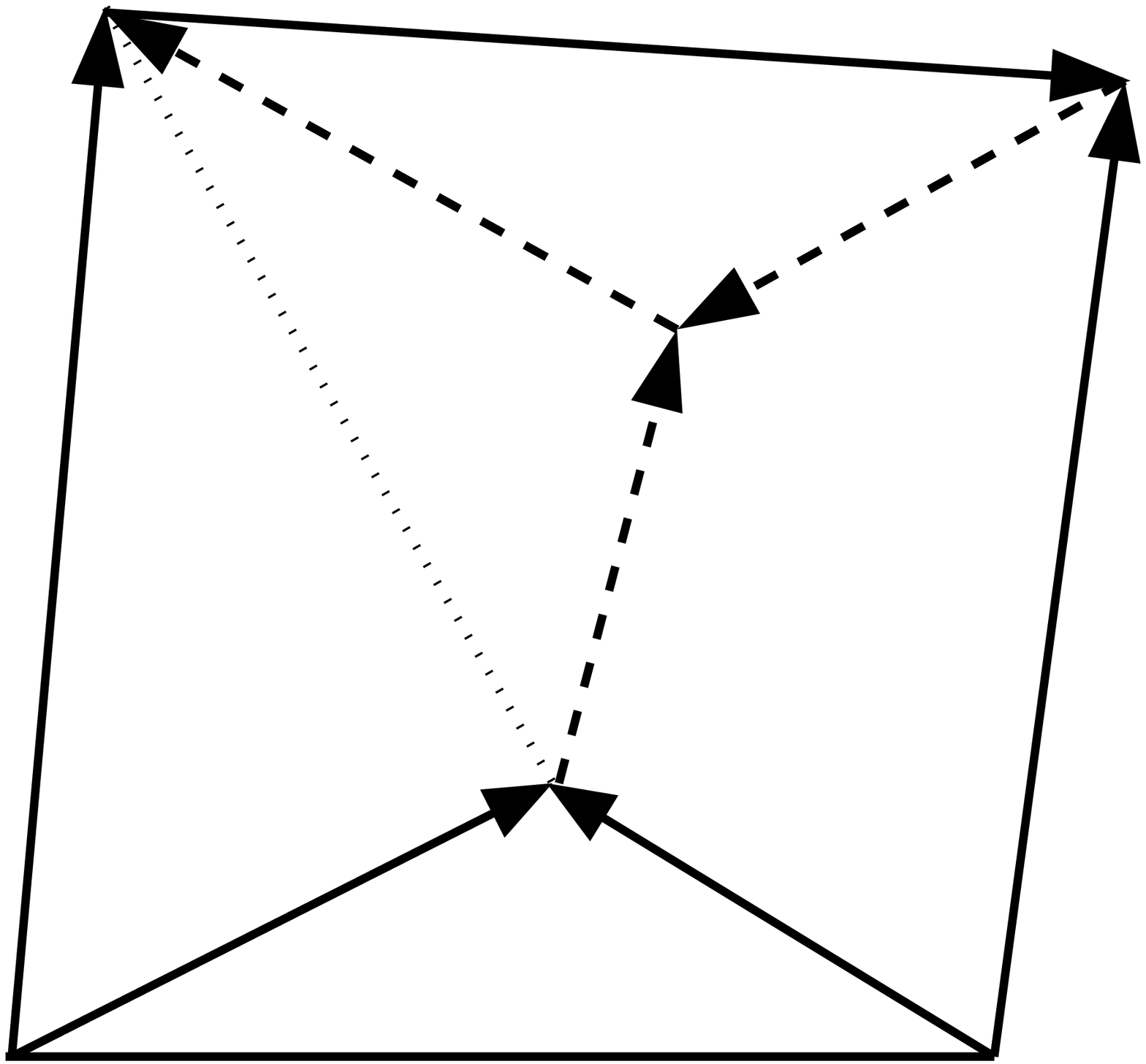}}
			\put(0,0){\mbox{$v_1$}}
			\put(450,0){\mbox{$v_2$}}
			\put(640,0){\mbox{$v_1$}}
			\put(1090,0){\mbox{$v_2$}}
			\put(290,165){\mbox{$v_3$}}
			\put(940,165){\mbox{$v_3$}}
			\put(0,540){\mbox{$v_4$}}
			\put(650,545){\mbox{$v_4$}}
			\put(520,510){\mbox{$v_5$}}
			\put(1170,510){\mbox{$v_5$}}
			\put(980,340){\mbox{$v_6$}}
		\end{picture}
	\caption{A Henneberg I and a Henneberg II step with directed edges.}
	\label{FigHennebergDirect}
\end{center}
\end{figure}
\setlength{\unitlength}{1pt}

 Now Lemma~\ref{LiftingLemma} can be used to describe a lifting that induces a subdivision that has 
$[\xi_1,0]+\dots+[\xi_4,0]+[2 \xi_5,0]+\dots+ [2 \xi_{2n},0] $ as a mixed cell. In the notation of Lemma~\ref{LiftingLemma} the chosen edges give rise to the edge matrix $E =\begin{pmatrix} \textup{Id}_4 & \mathbf{0}\\ \mathbf{0}& 2\textup{Id}_{2n-4} \end{pmatrix}$, where $\textup{Id}_k$ denotes the $k\times k$ identity matrix. Substituting this into the second condition (\ref{LiftingCondition}) of Lemma~\ref{LiftingLemma} we get that for each Newton polytope $\NP(h_i)$ all vertices $v^{(i)}_s$ of $\NP(h_i)$ which are not $0$ or $2\xi_i$ have to satisfy 
\[
\left( \mu_{1_1} -\mu_{i_1},\dots,\mu_ {{2n}_{2n}}-\mu_{i_{2n}} \right)\cdot v^{(i)}_s \leq 0 \, ,
\]
where we denote by $\mu_j=(\mu_{j_1},\dots,\mu_{j_{2n}})\in \Q^{2n}$ the lifting vector for $\NP(h_j)$. Since all the entries of each $v^{(i)}_s$ are non-negative this can easily be done by choosing the vectors $\mu_j$ such that their $j^{th}$ entry is sufficiently small and all other entries are sufficiently large. 
\end{proof}
The preliminary remarks at the beginning of this section further imply: 
\begin{cor}
The number of embeddings of a Laman graph framework with generic edge lengths is strictly less then $4^{n-2}$.
\end{cor}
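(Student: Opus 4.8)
The plan is to derive the Corollary directly from the two facts that are already available: the exact value of the mixed volume established in Theorem~\ref{GeneralBound}, and the degeneracy of system~\eqref{SoE} recorded at the opening of Section~\ref{sec:BKK}. No new computation should be needed; the work lies entirely in assembling these pieces and in justifying that the strict bound on torus solutions really controls the number of embeddings.

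First I would recall the dictionary between embeddings and solutions: since the choice of $c_1,c_2,c_3$ pins the position of $v_1$ and the direction of $[v_1,v_2]$, each embedding modulo rigid motions gives a real solution of~\eqref{SoE}, so the number of embeddings is at most the number of complex solutions of~\eqref{SoE}. The opening observation of Section~\ref{sec:BKK} exhibits a nonzero direction $w=(0,0,0,0,-1,\dots,-1)$ whose associated face system $\partial_w h_1=\dots=\partial_w h_{2n}=0$ admits the solution $(c_1,c_2,l_{1,2}-c_1,c_3,1,i,\dots,1,i)$, all of whose coordinates are nonzero. By Bernstein's Second Theorem~\ref{The:Bernstein2} this means the genericity hypothesis fails, so the mixed volume of the Newton polytopes of~\eqref{SoE} is a \emph{strict} upper bound on the number of common zeros in $(\C^*)^{2n}$. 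Feeding in Theorem~\ref{GeneralBound}, which computes this mixed volume to be exactly $4^{n-2}$, yields that~\eqref{SoE} has strictly fewer than $4^{n-2}$ solutions in $(\C^*)^{2n}$.

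The one point requiring care, and the only genuine obstacle, is the transition from the torus $(\C^*)^{2n}$ to the count of embeddings, since an embedding with a vanishing vertex coordinate would lie outside $(\C^*)^{2n}$ and could a priori escape the strict bound. I would dispatch this by a genericity argument: the four pinned coordinates are nonzero by the stipulations $c_i\neq 0$ and $c_1\neq l_{1,2}$, while for the remaining coordinates the set of edge-length tuples producing a solution with a zero coordinate is a proper algebraic subset of the parameter space. Hence for generic edge lengths every embedding lies in $(\R^*)^{2n}\subset(\C^*)^{2n}$, so the number of embeddings is bounded by the number of $(\C^*)^{2n}$-solutions, which we have just shown is strictly less than $4^{n-2}$. (Alternatively one may invoke the Li--Wang bound recalled in Section~\ref{sec:Bernstein}, using that every $\NP(h_i)$ contains $0$, to bound all solutions in $\C^{2n}$ by the mixed volume, and then argue that the degenerate direction $w$ still forces at least one solution to escape toward infinity.) This closes the argument with no further calculation beyond Theorem~\ref{GeneralBound}.
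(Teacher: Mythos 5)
Your proposal is correct and follows essentially the same route as the paper: the corollary is derived there in one line from the degenerate direction $w$ exhibited at the start of Section~\ref{sec:BKK} (which, via Theorem~\ref{The:Bernstein2}, makes the mixed volume a \emph{strict} upper bound) combined with the value $4^{n-2}$ computed in Theorem~\ref{GeneralBound}. The only difference is that you spell out the passage from solutions in $(\C^*)^{2n}$ to actual embeddings, a point the paper disposes of by the Li--Wang remark in Section~\ref{sec:Bernstein}; your added genericity argument is sound but does not constitute a different method.
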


\subsection{Open problems and future prospects}
Examples like the case study of Laman graph frameworks on $6$ vertices in Paragraph~\ref{sec:6Vertices} suggest that the mixed volume of the system \eqref{SubSoE} gives a significantly better bound on the number of embeddings than the one analyzed in Theorem~\ref{GeneralBound}. However it remains open to compute the mixed volume of the system \eqref{SubSoE} as a function of $n$ like  it was done for the system \eqref{SoE} in Theorem~\ref{GeneralBound}. 

The focus of our paper was on embeddings in the plane. See \cite{BorceaStreinu,EmirisVarvitsiotis} for embeddings into higher-dimensional spaces. With regard to the Bernstein bounds 
there are straightforward analogs of Lemma~\ref{HennebergILemma} and Theorem~\ref{GeneralBound} to higher dimensions.

\bibliography{Library}{}
\bibliographystyle{abbrv}

\end{document}